\newcommand{\DD}{\textnormal{D}}
\newtheorem{thm}{Theorem}
\newtheorem{lem}{Lemma}
\newtheorem{cor}{Corollary}
\newtheorem{prop}{Proposition}
\theoremstyle{definition}
\newtheorem{defi}{Definition}
\theoremstyle{remark}
\newtheorem{Rema}{Remark}
\newtheorem*{rema*}{Remark}
\newcommand{\NN}{\mathbb{N}}
\newcommand{\RR}{\mathbb{R}}
\newcommand{\ZZ}{\mathbb{Z}}
\DeclareMathOperator{\divergence}{div}
\newcommand{\restr}[2]{\ensuremath{\left. #1 \right|_{#2}}}
\newcommand{\cepsilon}{c _\varepsilon}
\newcommand{\cepsiloninit}{c _{0, \varepsilon} }
\newcommand{\vepsilon}{v _\varepsilon }
\newcommand{\vepsiloninit}{v _{0, \varepsilon} }
\author[T. Hmidi]{Taoufik Hmidi}
\address{IRMAR, Universit\'e de Rennes 1\\ Campus de
Beaulieu\\ 35~042 Rennes cedex\\ France}
\email{thmidi@univ-rennes1.fr}
\author[S. Sulaiman]{Samira Sulaiman}
\address{IRMAR, Universit\'e de Rennes 1\\ Campus de
Beaulieu\\ 35~042 Rennes cedex\\ France}
\email{samira.sulaiman@univ-rennes1.fr}
\date{}
\begin{document}
\title
[About isentropic Euler system]
{Incompressible limit for the 2D isentropic Euler system with critical initial data}

\date{\today}

\maketitle

\begin{abstract} We study in this paper the low Mach number limit for the $2d$ isentropic Euler system with ill-prepared initial data belonging to the critical Besov space $B_{2,1}^2$. By combining Strichartz estimates with the special structure of the vorticity we prove that  the lifespan  goes to infinity as the Mach number goes to zero. We also prove the strong convergence in the space of the initial data $B_{2,1}^2$  of the incompressible parts  to the solution of the  incompressible Euler system. There are at least two main difficulties: the first one concerns the Beale-Kato-Majda criterion which is not known to work for rough regularities. However, the second one is related to the critical aspect of  the Strichartz norm   $\Vert(\textnormal{div}\,v_{\varepsilon},\nabla c_{\varepsilon})\Vert_{L_{T}^{4}L^\infty}$ which has the scale of $B_{2,1}^2$ in the space variable.
\end{abstract}

\tableofcontents

\section{Introduction}

This work is devoted to the study  of   the  isentropic  Euler equations describing the evolution of a compressible fluid evolving in the whole space and under a constant entropy constraint. The system is given by the coupled equations
\begin{equation}\label{Euc} 
\left\{ \begin{array}{l} 
\rho(\partial_t u+u\cdot\nabla u)+\nabla \rho^\gamma=0, \, \gamma>1\\ 
\partial_{t}\rho+\textnormal{div}\,(\rho \, u)=0\\
(u,\rho)_{|t=0}=(u_{0},\rho_{0}).\\  
\end{array} \right.
\end{equation}
Here, $u$ stands for the velocity field which is a time-dependent vector field over $\RR^d$, $\rho>0 $  is the density which is assumed to be nonnegative in order to avoid the vacuum. The parameter   $\gamma>1$ is the adiabatic exponent. 
The local well-posedness theory can be achieved  by using   the symmetrization  of Kawashima Makino and Ukai \cite{MKaw}. In other words,   we introduce  the sound speed defined by,
$${c}=2\frac{\sqrt\gamma}{\gamma-1}\rho^{\frac{\gamma-1}{2}}$$
 and thus the system \eqref{Euc} reduces to 
\begin{equation}\label{l01} 
\left\{ \begin{array}{l} 
\partial_{t}u+u\cdot\nabla u+\bar{\gamma}\, c\,\nabla\, c=0\\ 
\partial_{t}c+u\cdot\nabla c+ \bar\gamma\,c\,\textnormal{div}\,u=0\\
(u,c)_{|t=0}=(u_0,c_0),\\  
\end{array} \right.
\end{equation}
with  $\bar\gamma:=\frac{\gamma-1}{2}.$ This form fits with the theory of the symmetric quasilinear hyperbolic systems developed by  Klainerman and Majda and which answers   the local well-posedness in the framework of Sobolev spaces $H^s, s>\frac{d}{2}+1,$ \mbox{see \cite{km81,km82}.} It is well-known that the system \eqref{l01} can be considered as   a good approximation of the incompressible Euler equations, despite we make some suitable assumptions as we will see later. The first step to reach this result is to introduce  a suitable rescaling and change of variables. This can be done  by making a perturbation of order $\varepsilon$ from the constant background state \,$(0,c_0)$:
$$
u(t,x)=\bar\gamma c_0\varepsilon v_\varepsilon(\varepsilon\bar\gamma c_0 t,x),\, c(t,x)=c_0+\bar\gamma c_0\varepsilon c_\varepsilon(\varepsilon\bar\gamma c_0 t,x).
$$
Therefore we get the system
\begin{equation}\label{C1} 
\left\{ \begin{array}{l} 
\partial_{t}v_{\varepsilon}+v_{\varepsilon}\cdot\nabla v_{\varepsilon}+\bar\gamma c_{\varepsilon}\nabla c_{\varepsilon}+\frac{1}{\varepsilon}\nabla c_{\varepsilon}=0\\ 
\partial_{t}c_{\varepsilon}+v_{\varepsilon}\cdot\nabla c_{\varepsilon} +\bar\gamma c_{\varepsilon}\textnormal{div}\,v_{\varepsilon}+\frac{1}{\varepsilon}\textnormal{div}\,v_{\varepsilon}=0\\
(v_{\varepsilon},c_{\varepsilon})_{| t=0}=(v_{0,\varepsilon},c_{0,\varepsilon}).\\  
\end{array} \right.
\end{equation}
The parameter $\varepsilon$ is called the Mach number and measures the compressibility of the fluid. For more details about the derivation of this last model we refer for instance  to 
\cite{dh04,km81,GS01}. The mathematical study of this model is well developed and  there are a lot of papers  devoted   to the rigorous justification of the convergence  towards the incompressible Euler equations when the Mach number goes to zero. We recall that the incompressible Euler  system is given by,
\begin{equation}
   \label{c2}
   \left\{
     \begin{array}{l}
       \partial _t v + v \cdot \nabla v + \nabla p=0 \\
       \divergence v =0 \\
       \restr{ v }{ t=0 } = v _0 .
     \end{array}
   \right.
\end{equation}
 We point out that the issue to this problem depends on several  factors like the geometry of the domain    where the fluid is assumed to evolve or the state of the initial data.  For example, the analysis in the case of  the full space is completely different from  the torus due to   the resonance phenomenon.  The second factor is related to the structure of  the initial data and  whether they are well-prepared or not. In the well-prepared case, the initial data are assumed to be slightly compressible, which means that $(\divergence \vepsiloninit,\nabla \cepsiloninit) =O(\varepsilon)$ as $\varepsilon \rightarrow 0 $. This assumption allows to get a uniform bound \mbox{for $(\partial_t\vepsilon)_{\varepsilon},$} and consequently we can  pass to the limit by using Aubin-Lions compactness lemma, for more details  see  \cite{km81,km82}. However, in
the ill-prepared case, the family $(\vepsiloninit,\cepsiloninit)_{\varepsilon} $ 
 is only assumed to be bounded in some Sobolev spaces $H^s$ with $ s>\frac{d}{2}+1$ and  the incompressible parts of $(\vepsiloninit)_{\varepsilon} $ converge to some vector field $ v _0 $.   This case  is more subtle because the time derivative  $\partial_t\vepsilon,$ is of size $O(\frac{1}{\varepsilon})$ and the preceding compactness method is out of use. To overcome this difficulty, Ukai \cite{u86} used  the  dispersive effects generated by the acoustic waves in order to prove that the compressible part of the velocity and the acoustic term vanish when $\varepsilon$ goes to zero.   
We  point out that this problem has already been studied in numerous
papers, see for instance 
\cite{aaz,A87,d,D02,D99,km81,km82,L95,LM98,M84,GS01, S87,u86}.

Concerning the blow up theory, it is well-known that contrary to the incompressible Euler system, the equations \eqref{Euc} develop in space dimension two singularities in finite time and for some smooth initial data, see \cite{r89}. This phenomenon holds true for higher dimensions, see \cite{st85}. On the other hand, it was shown in  \cite{km82} that for the whole space or for the torus domain when the limit system \eqref{c2} exists for some \mbox{time interval $[0,T_0]$} then for ever $T<T_0$ there exists $\varepsilon_0>0$ such that  for $\varepsilon<\varepsilon_0$ the solution to \eqref{C1} exists till the time $T$.  This result was established for the well-prepared case and with sufficient smooth initial data, that is, in  $H^s$ and $s>\frac{d}{2}+2.$ The proof relies on the perturbation theory which ceases to work for lower regularities $\frac{d}{2}+2>s>\frac{d}{2}+1$.   Consequently, if we denote by $T_\varepsilon$ the  lifespan of the solution $(\vepsilon,\cepsilon)$ then we get  in space dimension two that  $\lim_{\varepsilon\to0}T_\varepsilon=+\infty.$  It seems that we can get better information about $T_\varepsilon$  when the initial data have some special structures. In \cite{A93}, Alinhac proved that in space dimension two and for axisymmetric initial data the lifespan of the solution is equivalent to $\frac1\varepsilon$. In dimension three and for irrotational velocity, Sideris \cite{st85} established that  the solutions are almost global in time, that is, their lifespan   are bounded below by $e^{\frac{1}{\varepsilon}} $. Finally, we mention the results of  \cite{Grassin,Serre} dealing with the global existence under some suitable conditions on the initial data: the initial density must be small and has a compact support and the spectrum of $\nabla u_0$ must be far away the set of the negative real numbers.

We intend in this paper to deal with the incompressible limit problem  in space dimension two but for the critical  Besov regularity $B_{2,1}^2$. We emphasize that by using Strichartz estimates  we can prove that for initial data  with sub-critical regularity, that is  $H^s$ and $ s>2$, the lifespan is bounded below by $C\log\log\frac1\varepsilon.$ We can interpret this double logarithmic growth by the fact that the Sobolev norm of the solutions to the incompressible Euler equations has at most a double exponential growth in time. 
Our main result reads as follows.

\begin{thm}\label{theo2}
Let $\{(v_{0,\varepsilon},c_{0,\varepsilon})_{0<\varepsilon\le 1}\}$ be a family of  initial data such that 
$$\sum_{q\ge-1}2^{2q}\sup_{0<\varepsilon\le 1}\Vert(\Delta_{q}v_{0,\varepsilon},\Delta_{q}c_{0,\varepsilon})\Vert_{L^{2}}<+\infty.$$
Then the system \eqref{C1} has a unique solution $(v_{\varepsilon}, c_{\varepsilon})\in\mathcal{C}([0,T_{\varepsilon}]; B_{2,1}^{2})$ with
$$\lim_{\varepsilon\to0}T_{\varepsilon}=+\infty.$$
Moreover, the acoustic parts of the solutions tend to zero: 
$$\lim_{\varepsilon\to 0}\Vert(\textnormal{div}\,v_{\varepsilon},\nabla c_{\varepsilon})\Vert_{L_{T_{\varepsilon}}^{1}L^\infty}=0.$$
Assume in addition that  $\displaystyle{\lim_{\varepsilon\to0}\|\mathbb{P}v_{0,\varepsilon}-v_0\|_{L^2}=0},$ for some  $v_0\in B_{2,1}^2$. Then the incompressible \mbox{parts $(\mathbb{P} v_{\varepsilon})_{\varepsilon}$} converge to the solution $v$ of the system \eqref{c2} associated to the initial data $v_0.$ More precisely, for every $T>0$,
$$\lim_{\varepsilon\to0}\|\mathbb{P}v_{\varepsilon} -v\|_{L^\infty_TB_{2,1}^2}=0.$$
Where we denote by $\mathbb{P}v=v-\nabla\Delta^{-1}\textnormal{div } v$, the Leray's projector over solenoidal vector fields.
\end{thm}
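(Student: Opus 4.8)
The plan is to reduce everything to uniform-in-$\varepsilon$ a priori estimates together with a continuation argument. Local well-posedness in $B_{2,1}^2$ with a blow-up criterion follows from the paradifferential theory of symmetric hyperbolic systems: the singular coupling $\frac1\varepsilon(\nabla\cepsilon,\divergence\vepsilon)$ is skew-adjoint and therefore drops out of the $B_{2,1}^2$ energy estimate, so the lifespan is governed by an integral of the form $\int_0^t(\|\nabla\vepsilon\|_{L^\infty}+\|\nabla\cepsilon\|_{L^\infty})\,ds$, while uniqueness comes from an $L^2$ (or $B_{2,1}^1$) stability estimate. Writing $\vepsilon=\mathbb{P}\vepsilon+\mathbb{Q}\vepsilon$, I separate the incompressible unknown $\mathbb{P}\vepsilon$, whose gradient I control through the vorticity, from the acoustic unknowns $(\mathbb{Q}\vepsilon,\cepsilon)$, whose gradients I control through dispersion.

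First I would exploit the structure of the vorticity $\omega_\varepsilon=\partial_1\vepsilon^2-\partial_2\vepsilon^1$. Taking the curl of the velocity equation kills the gradient terms $\bar\gamma\cepsilon\nabla\cepsilon$ and $\frac1\varepsilon\nabla\cepsilon$, leaving $\partial_t\omega_\varepsilon+\vepsilon\cdot\nabla\omega_\varepsilon+\omega_\varepsilon\,\divergence\vepsilon=0$; combining this with the $\cepsilon$-equation shows that the potential vorticity $\Gamma_\varepsilon:=\omega_\varepsilon(1+\bar\gamma\varepsilon\cepsilon)^{-1/\bar\gamma}$ is purely transported, $\partial_t\Gamma_\varepsilon+\vepsilon\cdot\nabla\Gamma_\varepsilon=0$. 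Hence $\|\Gamma_\varepsilon(t)\|_{L^\infty}=\|\Gamma_\varepsilon(0)\|_{L^\infty}$, and as long as $\varepsilon\|\cepsilon\|_{L^\infty_TL^\infty}$ stays bounded (which the bootstrap guarantees since $\|\cepsilon\|_{B_{2,1}^2}$ is bounded and $\varepsilon\to0$) I recover $\|\omega_\varepsilon\|_{L^\infty_TL^\infty}\lesssim\|\omega_{0,\varepsilon}\|_{L^\infty}$, uniformly in $t$ and $\varepsilon$. This is the substitute for the Beale--Kato--Majda argument: via the logarithmic estimate $\|\nabla\mathbb{P}\vepsilon\|_{L^\infty}\lesssim\|\omega_\varepsilon\|_{L^2}+\|\omega_\varepsilon\|_{L^\infty}\log(e+\|\vepsilon\|_{B_{2,1}^2})$ it reduces the incompressible part of the Lipschitz norm to a harmless logarithm of the $B_{2,1}^2$ norm.

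The remaining and hardest input is the acoustic part. I would write the wave equation satisfied by $(\divergence\vepsilon,\cepsilon)$ with propagation speed $1/\varepsilon$ and apply Strichartz estimates (after rescaling time by $\tau=t/\varepsilon$) at the $2d$ endpoint pair $(4,\infty)$ to get $\|(\divergence\vepsilon,\nabla\cepsilon)\|_{L^4_TB^0_{\infty,1}}\lesssim\varepsilon^{\alpha}\,\Phi(\|(\vepsilon,\cepsilon)\|_{L^\infty_TB_{2,1}^2})$ for some $\alpha>0$. This is exactly the delicate point: the $(4,\infty)$ endpoint lives precisely at the scale of $B_{2,1}^2$ in the space variable, so the estimate only closes because of the $\ell^1$-summability built into the Besov space, which also replaces $L^\infty$ by $B^0_{\infty,1}$ and renders the Riesz transforms in $\mathbb{Q}$ harmless. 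By Hölder, $\int_0^t\|\divergence\vepsilon\|_{L^\infty}\,ds\le t^{3/4}\varepsilon^{\alpha}(\ldots)$, so the acoustic contribution to the Lipschitz integral is small as soon as $t\ll\varepsilon^{-4\alpha/3}$. Feeding the bounded vorticity, the logarithmic estimate and this small acoustic term into a Gronwall inequality for $\|(\vepsilon,\cepsilon)\|_{B_{2,1}^2}$ closes a continuity argument on an interval $[0,T_\varepsilon]$ with $T_\varepsilon\to+\infty$; the claim $\|(\divergence\vepsilon,\nabla\cepsilon)\|_{L^1_{T_\varepsilon}L^\infty}\le T_\varepsilon^{3/4}\varepsilon^\alpha(\ldots)\to0$ then follows once one checks that the growth of $T_\varepsilon$ is slow enough.

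For the convergence I would pass to the vorticity formulation. Since $\mathrm{curl}\,\mathbb{P}\vepsilon=\omega_\varepsilon$ and $\mathbb{P}(\cepsilon\nabla\cepsilon)=\tfrac12\mathbb{P}\nabla(\cepsilon^2)=0$, the difference $\theta_\varepsilon:=\omega_\varepsilon-\omega$, with $\omega=\mathrm{curl}\,v$, solves $\partial_t\theta_\varepsilon+\vepsilon\cdot\nabla\theta_\varepsilon=-(\mathbb{P}\vepsilon-v)\cdot\nabla\omega-\mathbb{Q}\vepsilon\cdot\nabla\omega-\omega_\varepsilon\,\divergence\vepsilon$, where by Biot--Savart $\|\theta_\varepsilon\|_{B_{2,1}^1}\simeq\|\mathbb{P}\vepsilon-v\|_{B_{2,1}^2}$. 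A Gronwall estimate in $B_{2,1}^1$ then controls $\|\mathbb{P}\vepsilon-v\|_{L^\infty_TB_{2,1}^2}$ by the initial gap $\|\mathbb{P}v_{0,\varepsilon}-v_0\|$, which tends to $0$ by hypothesis, plus the two source terms built from the acoustic part $\mathbb{Q}\vepsilon$ and $\divergence\vepsilon$; using the uniform $B_{2,1}^2$ bounds together with the decay $(\divergence\vepsilon,\nabla\cepsilon)\to0$ and $\mathbb{Q}\vepsilon\to0$, these sources vanish as $\varepsilon\to0$. The main obstacle throughout is the criticality: every product and commutator must be estimated at the borderline scale of $B_{2,1}^2$, so the paraproduct decompositions have to be arranged so that the small factor always carries the dispersive $L^4_TB^0_{\infty,1}$ norm while the $\ell^1$-summation absorbs the endpoint loss; this is where the special vorticity structure and the sharp Strichartz estimate must be combined with care.
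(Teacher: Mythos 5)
Your overall architecture (vorticity for the incompressible part, Strichartz for the acoustic part, a continuity argument to close) matches the paper's, but two of your key estimates fail precisely at the critical regularity $B_{2,1}^2$, and these failures are the two main difficulties the paper is built to overcome. First, the logarithmic inequality $\Vert\nabla\mathbb{P}v_\varepsilon\Vert_{L^\infty}\lesssim\Vert\omega_\varepsilon\Vert_{L^2}+\Vert\omega_\varepsilon\Vert_{L^\infty}\log(e+\Vert v_\varepsilon\Vert_{B_{2,1}^2})$ is a sub-critical estimate: its proof splits frequencies at $2^N$ and needs the tail $\sum_{q>N}\Vert\Delta_q\nabla v\Vert_{L^\infty}\lesssim\sum_{q>N}2^{2q}\Vert\Delta_q v\Vert_{L^2}$ to decay in $N$, which a bound in $B_{2,1}^2$ alone does not provide (it does in $H^{2+\epsilon}$, whence the $\log$). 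So an $L^\infty$ bound on the vorticity --- whether obtained from your potential-vorticity conservation or from the transport equation plus smallness of $\Vert\mathrm{div}\,v_\varepsilon\Vert_{L^1_tL^\infty}$, both of which are fine --- is not enough to control $\Vert\nabla v_\varepsilon\Vert_{L^\infty}$ here. The paper instead bounds $\Vert\nabla v_\varepsilon\Vert_{L^\infty}$ by $\Vert v_\varepsilon\Vert_{L^2}+\Vert\mathrm{div}\,v_\varepsilon\Vert_{B_{\infty,1}^0}+\Vert\omega_\varepsilon\Vert_{B_{\infty,1}^0}$ (Lemma \ref{le5}) and then proves a new Vishik-type \emph{linear} growth estimate for $\Vert\omega_\varepsilon\Vert_{B_{\infty,1}^0}$ under compressible transport (Theorem \ref{theo3}, via Lagrangian coordinates, a composition law in $B_{p,1}^{2/p}$, and a frequency-by-frequency dynamical interpolation). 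Your proposal has no substitute for this, and without it the Gronwall loop for $\Vert(v_\varepsilon,c_\varepsilon)\Vert_{B_{2,1}^2}$ does not close.

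Second, the claimed bound $\Vert(\mathrm{div}\,v_\varepsilon,\nabla c_\varepsilon)\Vert_{L^4_TB_{\infty,1}^0}\lesssim\varepsilon^\alpha$ cannot be obtained: the $L^4_TL^\infty$ Strichartz estimate with gain $\varepsilon^{1/4}$ costs $3/4$ of a derivative on the data and on the source $v_\varepsilon\cdot\nabla v_\varepsilon$, so applying it to the gradient-level quantities would require regularity $B_{2,1}^{11/4}$, which is unavailable; this is exactly the ``critical scale'' obstruction stated in the abstract. The paper applies Strichartz only to the lower-order quantities $(\mathbb{Q}v_\varepsilon,c_\varepsilon)$ (Proposition \ref{pr2}) and recovers $\Vert(\mathrm{div}\,v_\varepsilon,\nabla c_\varepsilon)\Vert_{L^1_TB_{\infty,1}^0}\to0$ only by splitting frequencies at $2^N$ and exploiting an \emph{extra, non-quantitative} high-frequency decay of the data: by Corollary \ref{cor1} the family actually lies in a heterogeneous space $B_{2,1}^{2,\Psi}$ with $\Psi\to\infty$, the tail is then $O(1/\Psi(N))$, and optimizing $N$ yields the implicit rate $\Phi(\varepsilon)=\Psi^{-\beta}(\log\varepsilon^{-1/8})$ rather than a power of $\varepsilon$; accordingly $T_\varepsilon\sim\log\log\Psi(\log\varepsilon^{-1})$, far from your $\varepsilon^{-4\alpha/3}$. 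This same $\Psi$-decay is also what rescues your last step: an $L^2$ convergence of $\mathbb{P}v_\varepsilon-v$ together with uniform $B_{2,1}^2$ bounds only gives convergence in $B_{2,1}^s$ for $s<2$, and your $B_{2,1}^1$ Gronwall for the vorticity difference needs $\nabla\omega\in B_{2,1}^1$, i.e.\ $v\in B_{2,1}^3$; the paper gets the endpoint convergence in $B_{2,1}^2$ by interpolating the $L^2$ estimate against the uniform $B_{2,1}^{2,\Psi}$ bound, using $\widetilde\Psi/\Psi\to0$.
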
 
Before discussing the important ingredients of the proof, we will give some useful remarks.
\begin{Rema}\label{rimk1}
\begin{enumerate} 
\item The result of   Theorem \ref{theo2} will be generalized in Theorem \ref{ttheo2}. This latter one gives a precise information about the lifespan: we get 
$$T_\varepsilon\geq C_0\log\log\Psi(\log\frac1\varepsilon),$$
where $\Psi$ is an implicit function depending on the profile of the initial data.
\item In the second part of Theorem \ref{theo2}, the velocity $v_0$ belongs naturally to the space $L^2$, but according to the assumption $\sum_{q\ge-1}2^{2q}\sup_{0<\varepsilon\le 1}\Vert\Delta_{q}v_{0,\varepsilon}\Vert_{L^{2}}<+\infty$, combined with the weak compactness of the Besov space $B_{2,1}^2$, we obtain $v_0\in B_{2,1}^2.$ Moreover we get the strong convergence to $v_0$  in the space $B_{2,1}^2$. Indeed, let $N\in \NN^*$, then we can write
\begin{eqnarray*}
\|\mathbb{P}v_{0,\varepsilon}-v_0\|_{B_{2,1}^2}\lesssim 2^{2N}\|\mathbb{P}v_{0,\varepsilon}-v_0\|_{L^2}+\sum_{q\geq N} 2^{2q}\big(\|\Delta_q v_0\|_{L^2}+\sup_{0<\varepsilon\le 1}\Vert\Delta_{q}v_{0,\varepsilon}\Vert_{L^{2}}\big).
\end{eqnarray*}
Thus, letting $\varepsilon$ go to zero, we infer
$$
\limsup_{\varepsilon\to 0}\|\mathbb{P}v_{0,\varepsilon}-v_0\|_{B_{2,1}^2}\lesssim \sum_{q\geq N} 2^{2q}\big(\|\Delta_q v_0\|_{L^2}+\sup_{0<\varepsilon\le 1}\Vert\Delta_{q}v_{0,\varepsilon}\Vert_{L^{2}}\big)
$$
and therefore when $N$ goes to $+\infty$ yields the desired result.
\end{enumerate}
\end{Rema}
Let us now describe the main   difficulties that emerge  when we try to work in the critical framework and  give the basic ideas to solve them. We distinguish   two principal difficulties: The first one has a compressible nature and concerns the estimate of the acoustic parts $\Vert(\textnormal{div}\,v_{\varepsilon},\nabla c_{\varepsilon})\Vert_{L_{T}^{1}L^\infty}$. Since in the space variable the norm of the acoustic parts scales like the space of the initial data, it seems then so hard to get an explicit rate on $\varepsilon$ for these quantities. It is worthy pointing out that for the sub-critical regularities the rate convergence to zero is of order $\varepsilon^\theta,$ for some $\theta>0.$  To overcome this critical aspect, we will proceed in the spirit of the the paper \cite{Hmidi-comp}: we can find an adequate nondecreasing  profile $\Psi$ such that $\lim_{q\to+\infty}\Psi(q)=+\infty$ and:
$$\sup_{\varepsilon}\sum_{q}\Psi(q)2^{2q}\|(\Delta_qv_{0,\varepsilon},\Delta_qc_{0,\varepsilon})\|_{L^2}<+\infty.$$
Then by performing  energy estimate we prove  that the solutions have the same additional decay in the time interval $[0,T_\varepsilon]$. This means that there is no energy transfer to higher frequencies. Consequently,   interpolating between this estimate and Strichartz estimate for the lower degree quantity $\|(\nabla\Delta^{-1}v_\varepsilon, c_\varepsilon)\|_{L^1_TL^\infty},$ we get that the acoustic parts vanish when the Mach number  goes to zero. However the rate convergence is implicit and can be related to the distribution on Fourier modes of the energy of the initial data.

The second difficulty that one should to deal with  has an incompressible nature and concerns the  Beale-Kato-Majda criterion \cite{BKM}. It was initially stated for the system \eqref{c2} and for the  sub-critical spaces $H^s, s>\frac{d}{2}+1$.  It asserts that  we can propagate the initial regularity in the time interval $[0,T]$ provided that we can control the quantity $\|\omega\|_{L^1_TL^\infty}$, where $\omega$ denotes the vorticity of the velocity. As a matter of fact, we get in space  dimension two the global existence of the strong solutions since the vorticity is  only transported by the flow and thus its $L^\infty$ norm is conserved. Unfortunately, this criterion is not known to work in the critical Besov space $B_{2,1}^{\frac{d}{2}+1}$  and it is replaced by a stronger norm  $\Vert\omega\Vert_{L^1_TB_{\infty,1}^{0}}.$ The global existence for $2d$ case was solved some years ago by Vishik \cite{vis} who was able to prove the following  linear growth:
$$\|\omega(t)\|_{B_{\infty,1}^0}\le C\|\omega_0\|_{B_{\infty,1}^0}\Big(1+\int_0^t\|\nabla v(\tau)\|_{L^\infty}d\tau\Big).$$
The Vishik's method relies on a  composition law in Besov space $B_{\infty,1}^0$ with a logarithmic growth.  We emphasize that in \cite{hmid-kera}, we gave an other proof which has the advantage to fit with  general models like transport-diffusion equations. Now, in order to get a lower bound for the lifespan satisfying $\displaystyle{\lim_{\varepsilon\to 0}T_\varepsilon=+\infty}$  we should establish an analogous estimate in the compressible case. This is not an easy task due to the nonlinearities in the vorticity equation and to the lack of the incompressibility of the velocity. We recall that the vorticity $\omega_\varepsilon=\partial_1 v^2_\varepsilon-\partial_2 v^1_\varepsilon$ satisfies the compressible transport equation:
$$
\partial_t\omega_\varepsilon+v_\varepsilon\cdot\nabla\omega_\varepsilon+\omega_\varepsilon\textnormal{div }v_\varepsilon=0.
$$
We will prove in Theorem \ref{theo3} the following   key estimate: for $p\in[1,+\infty[$
$$\Vert\omega_{\varepsilon}(t)\Vert_{B_{\infty,1}^{0}}\le C\Vert\omega_{0,\varepsilon}\Vert_{B_{\infty,1}^{0}}\Big(1+e^{C\Vert\nabla v_{\varepsilon} \Vert_{L_{t}^{1}L^\infty}}\Vert\textnormal{div}\,v_{\varepsilon}\Vert^{2}_{L_{t}^{1}B_{p,1}^{\frac2p}}\Big) \Big(1+\int_{0}^{t}\Vert\nabla v_{\varepsilon}(\tau)\Vert_{L^{\infty}}d\tau\Big).$$
We observe that for  $\textnormal{div}\,v_{\varepsilon}=0$ we get the Vishik's estimate. 
The proof will be done in the spirit of the work \cite{hmid-kera} but with slight modifications. In the first step, we use the Lagrangian coordinates combined with a filtration procedure to get rid of the compressible part. In the second step we establish  a suitable splitting of the vorticity and   use a dynamical interpolation method.

Concerning the incompressible limit we prove first the strong convergence in $L^\infty_{\textnormal{loc}}(\RR_+;L^2)$ by using  Strichartz estimates. This leads according to some standard  interpolation inequalities to the strong convergence in Besov spaces $B_{2,1}^s$, for all $s<2.$  However for the strong convergence in the space of the initial data $B_{2,1}^2$, we establish an additional frequency decay of the energy uniformly with respect to $t$ and $\varepsilon$. In other words, from the assumption on the initial data we can find a nondecreasing function $\Psi$ independent of $\varepsilon$ such that $( v_{0,\varepsilon}, c_{0,\varepsilon})$ belongs to $B_{2,1}^{2,\Psi},$ see the Definition \ref{heter}. This regularity will be conserved in the full interval $[0,T_\varepsilon]$ and thus the energy transfer for higher frequencies can not occur.

The paper is organized as follows: we recall in section $2$ some basic results about the Littlewood-Paley operators and Besov spaces. In section $3$, we gather some energy estimates for the heterogeneous Besov spaces $B_{2,1}^{s,\Psi}$ and we establish some useful Strichartz estimates for the acoustic parts of the fluid. In section $4$, we  establish a logarithmic estimate for a compressible transport model. In section $5$, we generalize the result of  Theorem \ref{theo2} and we give the proofs. 
\section{Basic Tools}
In this preliminary section, we are going to recall the  Littlewood-Paley operators and give some of their elementary properties. We will also  introduce some function spaces and review some important lemmas that will be used later.\\ 

\subsection{Littlewood-Paley operators}
We denote by $C$ any positive constant that may change from line to line and $C_{0}$ a real positive constant depending on the size of the initial data. We will use the following notations: for any positive $A$ and $B$, the notation  $A\lesssim B$ means that there exists a positive constant $C$ independent on $A$ and $B$ and such that $A\leqslant CB$.\\
To define Besov spaces we need to the following dyadic unity partition (see \cite{che}). There are  
two nonnegative radial functions $\chi\in\mathcal{D}(\mathbb{R}^{2})$ and $\varphi\in\mathcal{D}(\mathbb{R}^{2}\backslash\{ 0\})$ such that 
$$\chi(\xi)+ \displaystyle \sum_{q\ge 0}\varphi(2^{-q}\xi)=1, \quad\forall \xi\in\mathbb{R}^{2},$$
$$\displaystyle\sum_{q\in\mathbb{Z}}\varphi(2^{-q}\xi)=1, \quad\forall \xi\in\mathbb{R}^{2}\backslash\{0\},$$
$$\vert p-q\vert\ge 2\Rightarrow\mbox{supp }{\varphi}(2^{-p}\cdot)\cap\mbox{supp }{\varphi}(2^{-q}\cdot)=\varnothing,$$
$$q\ge 1\Rightarrow \mbox{supp }{\chi}\cap\mbox{supp }{\varphi}(2^{-q}\cdot)=\varnothing.$$
Let $u\in\mathcal{S}^{\prime}(\RR^2)$ we define the  Littlewood-Paley operators by
\begin{eqnarray*}
\Delta_{-1}u=\chi(\DD)u,\;\;\forall q\ge 0,\;\;\Delta_{q}u=\varphi(2^{-q}\DD)u\;\;\textnormal{and}\;\;S_{q}u=\displaystyle \sum_{-1\le p\le q-1}\Delta_{p}u.
\end{eqnarray*}
We can  easily check that $$u=\sum_{q\in \ZZ}\Delta_{q}u,\;\;\forall u \in \mathcal{S}^{\prime}(\RR^{2}).$$
Moreover, the Littlewood-Paley decomposition satisfies the property of almost orthogonality: for any $u,v\in\mathcal{S}^{\prime}(\RR^2),$
$$\Delta_{p}\Delta_{q}u=0\qquad \textnormal{if} \qquad \vert p-q \vert \geqslant 2 \qquad$$
$$\Delta_{p}(S_{q-1}u\Delta_{q}v)=0 \qquad \textnormal{if} \qquad  \vert p-q \vert \geqslant 5.$$\\
Let us note that the above operators $\Delta_{q}$ and $S_{q}$ map continuously $L^{p}$ into itself uniformly with respect to  $q$ and $p$. 
By the same way we define the homogeneous operators:
$$
\forall q\in\mathbb Z,\,\quad\dot{\Delta}_{q}u=\varphi(2^{-q}\hbox{D})v\quad\hbox{and}\quad\dot{S}_{q}u=\sum_{j\leq q-1}\dot{\Delta}_{j}u. 
$$
We notice that these operators are of convolution type. For example for $q\in\ZZ,\,$ we have
$$\dot\Delta_{q}u=2^{2q}h(2^q\cdot)\ast u,\quad\hbox{with}\quad h\in\mathcal{S},\quad \widehat{h}(\xi)=\varphi(\xi).
$$
Now we recall Bernstein inequalities, see for exapmle \cite{che}.
\begin{lem}\label{ber}
There exists a constant $C>0$ such that for all $q\in\NN\,,\,k \in \NN$ and for every tempered distriubution $u$ we have  
\begin{eqnarray*}
\sup_{\vert\alpha\vert=k}\Vert\partial^{\alpha}S_{q}u\Vert_{L^{b}}\leqslant C^{k}2^{q\big(k+2\big(\frac{1}{a}-\frac{1}{b}\big)\big)}\Vert S_{q}u\Vert_{L^{a}}\quad \textnormal{for}\quad \; b\geqslant a\geqslant 1\\
C^{-k}2^{qk}\Vert{\Delta}_{q}u\Vert_{L^{a}}\leqslant \sup_{\vert\alpha\vert=k}\Vert\partial^{\alpha}{\Delta}_{q}u\Vert_{L^{a}}\leqslant C^{k}2^{qk}\Vert {\Delta}_{q}u\Vert_{L^{a}}.
\end{eqnarray*}
\end{lem}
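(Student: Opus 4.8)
The plan is to establish both estimates by the standard argument: realize each dyadic block as a convolution against a rescaled \emph{fixed} kernel, invoke Young's inequality, and read off the $2^q$ scaling. The whole difficulty lies in keeping every implicit constant of the form $C^{k}$. I work throughout in $\RR^2$, which is why the gain of integrability reads $2(\tfrac1a-\tfrac1b)$ rather than $d(\tfrac1a-\tfrac1b)$.

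For the first inequality I use that, since $q\in\NN$, the spectrum of $S_q u$ lies in a fixed ball $2^q\mathcal B$. Choosing $\tilde\chi\in\mathcal D(\RR^2)$ with $\tilde\chi\equiv1$ on $\mathcal B$ gives $S_q u=\tilde\chi(2^{-q}\DD)S_q u$, whence
\begin{equation*}
\partial^{\alpha}S_q u = 2^{qk}\,\big(2^{2q}g_\alpha(2^q\cdot)\big)\ast S_q u,\qquad g_\alpha=\mathcal F^{-1}\big[(i\xi)^{\alpha}\tilde\chi(\xi)\big],\quad k=|\alpha|.
\end{equation*}
Young's inequality with $\tfrac1c=1+\tfrac1b-\tfrac1a$ together with the dilation identity $\|2^{2q}g_\alpha(2^q\cdot)\|_{L^c}=2^{2q(1/a-1/b)}\|g_\alpha\|_{L^c}$ then yields exactly $\|\partial^\alpha S_q u\|_{L^b}\le 2^{q(k+2(1/a-1/b))}\|g_\alpha\|_{L^c}\|S_q u\|_{L^a}$.

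Everything now rests on the uniform bound $\|g_\alpha\|_{L^c}\le C^{k}$, valid for all $|\alpha|=k$ and all $c\in[1,\infty]$, and this is the step I expect to be the genuine obstacle: a naive estimate produces a factorial $k!$ instead of a geometric $C^k$, which would be fatal for the Besov summations carried out later in the paper. I would control it through the Schwartz structure of $g_\alpha$, bounding $\|g_\alpha\|_{L^c}\lesssim\|g_\alpha\|_{L^\infty}+\big\||x|^{3}g_\alpha\big\|_{L^\infty}$ uniformly in $c$ and estimating each term by the $L^1$ norm on the frequency side. On $\mathrm{supp}\,\tilde\chi$, a ball of fixed radius $R$, one has $|\xi^{\alpha}|\le R^{k}$, while differentiating $(i\xi)^{\alpha}\tilde\chi$ at most three times only releases polynomial-in-$k$ factors multiplying lower powers of $\xi$; all contributions are therefore $\le C^{k}$, as required.

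For the second inequality the upper bound is the previous computation with $a=b$ and the ball replaced by the annulus $\mathcal C$. For the lower bound I reconstruct $\Delta_q u$ from its derivatives: fixing $\phi\in\mathcal D(\RR^2\setminus\{0\})$ with $\phi\equiv1$ near $\mathcal C$ and using $|\xi|^{2k}=\sum_{|\alpha|=k}\tfrac{k!}{\alpha!}\xi^{2\alpha}$ together with $\xi^{2\alpha}=(-1)^{k}(i\xi)^{\alpha}(i\xi)^{\alpha}$, I write
\begin{equation*}
\widehat{\Delta_q u}(\xi)=(-1)^{k}\sum_{|\alpha|=k}\frac{k!}{\alpha!}\,\frac{\phi(2^{-q}\xi)(i\xi)^{\alpha}}{|\xi|^{2k}}\,\widehat{\partial^{\alpha}\Delta_q u}(\xi),
\end{equation*}
where each multiplier is smooth and compactly supported because $\mathcal C$ avoids the origin, so the singular factor $|\xi|^{-2k}$ is harmless. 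Rescaling shows that $\mathcal F^{-1}$ of each multiplier equals $2^{-qk}$ times a rescaled fixed kernel of $L^1$ norm $\le C^k$, exactly as above; since $\sum_{|\alpha|=k}\tfrac{k!}{\alpha!}=2^{k}$ in dimension two, Young's inequality gives $2^{qk}\|\Delta_q u\|_{L^a}\le C^{k}\sup_{|\alpha|=k}\|\partial^{\alpha}\Delta_q u\|_{L^a}$, which is the remaining bound.
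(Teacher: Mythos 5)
The paper states this lemma without proof, simply citing Chemin's book, and your argument is precisely the standard one from that reference: realize each block as convolution against a dilated fixed kernel, apply Young's inequality, and invert the annulus-supported blocks through the multipliers $\phi(\eta)(i\eta)^{\alpha}|\eta|^{-2k}$. Your proof is correct, and in particular you rightly identify and resolve the one delicate point, namely that the kernel bounds must be geometric ($C^{k}$) rather than factorial in $k$, which your Leibniz count (polynomial-in-$k$ factors times $R^{k}$ on a fixed compact frequency set) does deliver.
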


\subsection{Besov spaces}
Now we will  define the  Besov spaces by using Littlewood-Paley operators. 
 Let $(p,r)\in[1,+\infty]^2$ and $s\in\mathbb R,$ then the non-homogeneous   Besov 
\mbox{space $B_{p,r}^s$} is 
the set of tempered distributions $u$ such that
$$
\|u\|_{B_{p,r}^s}:=\Big( 2^{qs}
\|\Delta_q u\|_{L^{p}}\Big)_{\ell^{r}}<+\infty.
$$
The homogeneous Besov space $\dot B_{p,r}^s$ is defined as the set of  $u\in\mathcal{S}'(\RR^d)$ up to polynomials such that
$$
\|u\|_{\dot B_{p,r}^s}:=\Big( 2^{qs}
\|\dot\Delta_q u\|_{L^{p}}\Big)_{\ell ^{r}(\ZZ)}<+\infty.
$$
We remark that the usual Sobolev space $H^s$ coincides with  $B_{2,2}^s$ for $s\in\RR$  and the H\"{o}lder space $C^s$ coincides with $B_{\infty,\infty}^s$ when $s$ is not an integer. 

The following embeddings are an easy consequence of  Bernstein inequalities, 
$$
B^s_{p_1,r_1}\hookrightarrow
B^{s+2({1\over p_2}-{1\over p_1})}_{p_2,r_2}, \qquad p_1\leq p_2\quad and \quad  r_1\leq r_2.
$$

{Let $T>0$} \mbox{and $\rho\geq1,$} we denote by $L^\rho_{T}B_{p,r}^s$ the space of tempered  distributions $u$ such that 
$$
\|u\|_{L^\rho_{T}B_{p,r}^s}:= \Big\|\Big( 2^{qs}
\|\Delta_q u\|_{L^p}\Big)_{\ell ^{r}}\Big\|_{L^\rho_{T}}<+\infty.$$
Now we will introduce the heterogeneous Besov spaces which are an extension of the classical Besov spaces.
\begin{defi}\label{heter}
Let $\Psi:\{-1\}\cup\NN\to\RR_+^*$ be a given function.

(i)
 We say that $\Psi$ belongs to the class $\mathcal{U}$ if the following conditions are satisfied:
\begin{enumerate}
\item $\Psi$ is a nondecreasing function.
\item There exists  $C>0$ such that
$$
\sup_{x\in\NN\cup\{-1\}}\frac{\Psi(x+1)}{\Psi(x)}\leq C.
$$

\end{enumerate}
(ii) We define the class $\mathcal{U}_\infty$ by the set of function  $\Psi\in \mathcal{U}$ satisfying 
$\displaystyle{\lim_{x\to +\infty}\Psi(x)=+\infty}$.

(iii) Let $s\in\RR, (p,r)\in [1,+\infty]^2$ and $\Psi\in \mathcal{U}$. We define the heterogeneous Besov space $B_{p,r}^{s,\Psi}$ as follows:
$$
u\in B_{p,r}^{s,\Psi} \Longleftrightarrow \|u\|_{B_{p,r}^{s,\Psi}}=\Big( \Psi(q)2^{qs} \|\Delta_q u\|_{L^p}\Big)_{\ell^r}<+\infty.
$$

\end{defi}
\begin{Rema}\label{comparison1}
\begin{enumerate}
\item From the part $(2)$ of the above Definition, we see that the profile $\Psi$ has at most an exponential growth: there exists $\alpha>0$ such that
$$
\Psi(-1)\le\Psi(q)\le \Psi(-1)e^{\alpha q},\quad\forall q\geq-1.
$$
When  the profile $\Psi$ has an exponential growth: $\Psi(q)=2^{\alpha q}$ with $\alpha\in\RR_+$,  the space $B_{p,r}^{s,\Psi}$ reduces to the classical Besov space $B_{p,r}^{s+\alpha}.$ 
\item The condition $(2)$ seems to be  necessary for the definition of $B_{p,r}^{s,\Psi}:$  it allows to get a coherent  definition which is independent of the choice of the dyadic partition. 

\end{enumerate}
\end{Rema}
The following lemma proved in \cite{Hmidi-comp} is important for the proof of Theorem \ref{theo2}. Roughly speaking, it says that  any element of a given Besov space is always more regular than its prescribed regularity. 

%
\begin{lem}\label{Besov}  
Let $s\in\RR,$ $p\in[1,+\infty],$ $r\in[1,+\infty[$ and $f\in B_{p,r}^{s}.$ Then there exists a function $\Psi\in\mathcal{U}_{\infty}$ such that $f\in B_{p,r}^{s,\Psi}.$
\end{lem}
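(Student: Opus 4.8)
$$\textbf{Proof strategy for Lemma \ref{Besov}.}$$

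The plan is to show that for any $f\in B_{p,r}^s$ one can find a weight $\Psi\in\mathcal{U}_\infty$ that grows slowly enough to stay summable against the dyadic energy of $f$, yet still tends to infinity. Set $a_q:=2^{qs}\|\Delta_q f\|_{L^p}$ for $q\ge-1$, so that by hypothesis $(a_q)_q\in\ell^r$ and in particular the tail sums $R_N:=\big(\sum_{q\ge N}a_q^r\big)^{1/r}$ (with the obvious modification if $r=1$) form a nonincreasing sequence with $R_N\to0$ as $N\to+\infty$. The goal is to produce $\Psi$ so that $(\Psi(q)a_q)_q$ is still in $\ell^r$; the mechanism is a classical ``slowly diverging weight against a convergent series'' construction, where $\Psi$ is built as a step function that increases by one unit each time the remaining tail energy has dropped by a fixed factor.

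Concretely, I would first discard the trivial case where only finitely many $a_q$ are nonzero (take $\Psi(q)=2^q$, say). Otherwise, using $R_N\to0$, I extract a strictly increasing sequence of integers $N_0<N_1<N_2<\cdots$ such that $R_{N_k}\le 2^{-k}R_{N_0}$, and then define $\Psi$ to be constant on each block $[N_k,N_{k+1})$, with $\Psi(q)=k+1$ there (and $\Psi(q)=1$ for $q<N_0$). This $\Psi$ is nondecreasing by construction and $\lim_{q\to+\infty}\Psi(q)=+\infty$ since the blocks exhaust $\NN$. To check summability, I would estimate
\begin{equation*}
\sum_{q\ge-1}\big(\Psi(q)a_q\big)^r=\sum_{k\ge0}(k+1)^r\sum_{N_k\le q<N_{k+1}}a_q^r\le\sum_{k\ge0}(k+1)^r R_{N_k}^r\le R_{N_0}^r\sum_{k\ge0}(k+1)^r2^{-kr},
\end{equation*}
and the last series converges, which gives $f\in B_{p,r}^{s,\Psi}$.

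The one genuinely delicate point, and the step I expect to cost the most care, is arranging that the constructed $\Psi$ lies in the class $\mathcal{U}$ — specifically condition $(2)$, the bounded-ratio requirement $\Psi(x+1)/\Psi(x)\le C$. A naive step function can jump from $k$ to $k+1$, giving a ratio $(k+1)/k$ which is fine, but one must ensure the increments are spaced so that no single jump is large; since $\Psi$ increases by exactly one unit at each breakpoint $N_k$ and is constant in between, the worst ratio is $\sup_k (k+1)/k=2$, so condition $(2)$ holds with $C=2$. I would remark that the requirement $r<+\infty$ is essential here, as it is what makes the tails $R_N$ vanish and drives the whole construction; the case $r=\infty$ is genuinely excluded. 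Finally I would note that $\Psi(q)a_q\in\ell^r$ forces the desired $\lim_{q\to+\infty}\Psi(q)=+\infty$ to be compatible with finiteness, completing the argument.
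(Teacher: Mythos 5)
Your construction is correct: the tail-based, slowly growing step weight $\Psi$ (increasing by one unit each time the $\ell^r$ tail $R_N$ halves) is exactly the standard argument, and you correctly verify monotonicity, divergence, the bounded-ratio condition $\Psi(x+1)/\Psi(x)\le 2$, and the summability of $\bigl(\Psi(q)a_q\bigr)_q$ (the finitely many terms with $q<N_0$ that you drop from the display are harmless). Note that the paper does not prove this lemma itself but defers to the cited reference \cite{Hmidi-comp}, where the proof is essentially the one you give, so your approach matches the intended one.
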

This yields to the following result, see \cite{Hmidi-comp}.
\begin{cor}\label{cor1}
Let $s\in\RR, p\in[1,+\infty], r\in[1,+\infty[$ and $(g_\varepsilon)_{0<\varepsilon\le 1}$ be a family of smooth functions satisfying
$$\Big(\sum_{q\ge-1}2^{qsr}\sup_{0<\varepsilon\le 1}\Vert\Delta_{q}g_\varepsilon\Vert^{r}_{L^p}\Big)^{\frac{1}{r}}<+\infty.$$
Then there exists $\Psi\in \mathcal{U}_{\infty}$ such that 
$g_\varepsilon\in B_{p,r}^{s,\Psi},$ with uniform bounds, that is,
$$
\sup_{0<\varepsilon\le1} \Big(\sum_{q\ge-1}\Psi^{r}(q)2^{qsr}\Vert\Delta_{q}g_\varepsilon\Vert^{r}_{L^p}\Big)^{\frac{1}{r}}<+\infty.
$$
\end{cor}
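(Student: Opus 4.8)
The plan is to deduce the uniform bound for the whole family from the single--function statement of Lemma~\ref{Besov}, by compressing the family into one auxiliary function. I first introduce the majorizing profile
$$
a_q:=\sup_{0<\varepsilon\le1}\|\Delta_q g_\varepsilon\|_{L^p},\qquad q\ge-1,
$$
so that the hypothesis is exactly $(2^{qs}a_q)_{q\ge-1}\in\ell^r$, while the conclusion amounts to producing a \emph{single} $\Psi\in\mathcal{U}_\infty$ with $(\Psi(q)2^{qs}a_q)_{q\ge-1}\in\ell^r$. The gain of passing to $(a_q)$ is that a profile built from it serves every $\varepsilon$ at once: since $\|\Delta_q g_\varepsilon\|_{L^p}\le a_q$, one has for all $\varepsilon$
$$
\sum_{q\ge-1}\Psi^r(q)2^{qsr}\|\Delta_q g_\varepsilon\|_{L^p}^r\le\sum_{q\ge-1}\Psi^r(q)2^{qsr}a_q^r,
$$
and the right--hand side will be finite and independent of $\varepsilon$.

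It remains to manufacture such a $\Psi$, and for this I would realize $(a_q)$ as the Littlewood--Paley profile of a genuine function in $B_{p,r}^s$. I fix a radial $g\in\mathcal{S}(\RR^2)$ whose Fourier transform is supported in a thin annulus around $|\xi|=1$ contained in the ``core'' of the $0$-th dyadic shell, i.e. the sub--annulus on which $\varphi\equiv1$ and every $\varphi(2^{-j}\cdot)$ with $j\neq0$ vanishes (such a core exists because the support of $\varphi$ spans strictly less than two octaves --- equivalently $\varphi(2^{-p}\cdot)$ and $\varphi(2^{-q}\cdot)$ are disjointly supported once $|p-q|\ge2$ --- so consecutive shells overlap only partially and leave a sub--annulus covered by $\varphi$ alone, on which the partition identity forces $\varphi\equiv1$). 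Setting $\psi_q:=2^{2q}g(2^q\cdot)$, so that $\widehat{\psi_q}(\xi)=\widehat g(2^{-q}\xi)$ is supported in the core of the $q$-th shell, one gets $\Delta_j\psi_q=\delta_{jq}\,\psi_q$. I then put
$$
f:=\sum_{q\ge0}\frac{a_q}{\|\psi_q\|_{L^p}}\,\psi_q
$$
(dropping terms with $a_q=0$). By disjointness of the frequency cores the blocks do not interact, so $\|\Delta_q f\|_{L^p}=a_q$ for every $q\ge0$, whence $\|f\|_{B_{p,r}^s}^r=\sum_{q\ge0}2^{qsr}a_q^r<+\infty$; in particular the series converges in $B_{p,r}^s\hookrightarrow\mathcal{S}'(\RR^2)$ and $f\in B_{p,r}^s$.

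Applying Lemma~\ref{Besov} to $f$ then furnishes $\Psi\in\mathcal{U}_\infty$ with $f\in B_{p,r}^{s,\Psi}$, i.e. $\sum_{q\ge0}\Psi^r(q)2^{qsr}a_q^r<+\infty$; adding the single low--frequency term $\Psi^r(-1)2^{-sr}a_{-1}^r<+\infty$ (harmless since $\Psi(-1)$ is a fixed positive number) and combining with the first paragraph yields the asserted uniform bound. The step I expect to be the main obstacle is the construction of $f$: one must place $\widehat g$ precisely inside the core of the shell so that $\Delta_j\psi_q=\delta_{jq}\psi_q$ holds \emph{exactly}, which is what removes the neighbouring cross terms and gives $\|\Delta_q f\|_{L^p}=a_q$ with no cancellation. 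Were the blocks allowed to interact, the lower bound $\|\Delta_q f\|_{L^p}\gtrsim a_q$ --- needed to transfer the decay from $f$ back to $(a_q)$ --- could fail. I would finally emphasize that, exactly as in Lemma~\ref{Besov}, the hypothesis $r<+\infty$ is used in an essential way.
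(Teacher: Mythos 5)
Your argument is correct. The reduction to the majorizing sequence $a_q=\sup_{0<\varepsilon\le1}\|\Delta_q g_\varepsilon\|_{L^p}$ is exactly the right first move, and your lacunary construction is sound: since $\varphi(2^{-p}\cdot)$ and $\varphi(2^{-q}\cdot)$ are disjointly supported for $|p-q|\ge2$, the support of $\varphi$ spans less than two octaves, so there is indeed a nonempty open annulus met by no other dyadic shell, on which the partition identity forces $\varphi\equiv1$ and $\chi\equiv0$; placing $\widehat g$ there gives $\Delta_j\psi_q=\delta_{jq}\psi_q$ exactly, hence $\|\Delta_q f\|_{L^p}=a_q$ for $q\ge0$, and Lemma~\ref{Besov} applied to $f$ delivers the profile $\Psi$. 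Where you differ from the paper is in how Lemma~\ref{Besov} is used. The paper (following \cite{Hmidi-comp}) observes that the construction of $\Psi$ in the proof of that lemma depends only on the scalar sequence $\big(2^{qs}\|\Delta_q f\|_{L^p}\big)_q\in\ell^r$, so one simply reruns that construction on $\big(2^{qs}a_q\big)_q$ and concludes by domination; no auxiliary function is needed. You instead treat Lemma~\ref{Besov} as a black box and manufacture a genuine element of $B_{p,r}^{s}$ whose Littlewood--Paley profile realizes $(a_q)$. Your route costs the extra harmonic-analysis construction (and the separate handling of the $q=-1$ block, which you do correctly), but it has the merit of not reopening the proof of the lemma; the paper's route is shorter and makes transparent that the statement is purely about $\ell^r$ sequences. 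Both correctly use $r<+\infty$ in the same essential way.
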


For the proof of the next proposition, see for example \cite{dh04}.
\begin{lem}\label{propagation}
Let  $u$ be a smooth  vector field, not necessary of zero divergence. Let $f$ be a  smooth solution of the transport equation 
 $$
\partial_{t}f+u\cdot\nabla f=g,\, f_{|t=0}=f_0,
$$
such that $f_0\in B_{p,r}^s(\mathbb R^2)$ and 
$g\in{L^1_{\textnormal{loc}}}(\mathbb R_{+};B_{p,r}^{s}).  $     
{Then the following assertions hold true}
\begin{enumerate}
\item Let $(p,r)\in[1,\infty]^2$and $s\in]0,1[$, then
\begin{equation*}\label{df}
\|f(t)\|_{B_{p,r}^s}     
\leq 
Ce^{CV(t)}
\Big(\|f_0\|_{B_{p,r}^s}+\int_{0}^te^{-CV(\tau)}\|g(\tau)\|_{B_{p,r}^{s}}d\tau\Big),
\end{equation*}
where $ V(t)=\int_{0}^t\|\nabla u(\tau)\|_{L^\infty}d\tau$ and $C$ is a constant depending  on $s.$
 \item Let $s\in]-1,0], r\in[1,+\infty]$ and $ p\in[2,+\infty]$ with $s+\frac2p>0$, then 
 \begin{equation*}\label{df}
\|f(t)\|_{B_{p,r}^s}     
\leq 
Ce^{CV_p(t)}
\Big(\|f_0\|_{B_{p,r}^s}+\int_{0}^te^{-CV_p(\tau)}\|g(\tau)\|_{B_{p,r}^{s}}d\tau\Big),
\end{equation*}
with  $ V_p(t)=\|\nabla u\|_{L^1_tL^\infty}+\|\textnormal{div} u\|_{L^1_tB_{p,\infty}^{\frac2p}}.$ 
\end{enumerate}
\end{lem}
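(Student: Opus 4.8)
The plan is to prove both inequalities by the classical Littlewood--Paley / commutator method for transport equations, carrying the two regularity ranges in parallel and isolating precisely the step where the divergence of $u$ must be controlled more strongly.

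First I would localize in frequency. Writing the commutator $R_q:=[u\cdot\nabla,\Delta_q]f=u\cdot\nabla\Delta_q f-\Delta_q(u\cdot\nabla f)$ and setting $f_q:=\Delta_q f$, the block solves
$$\partial_t f_q+u\cdot\nabla f_q=\Delta_q g+R_q,\qquad f_q(0)=\Delta_q f_0.$$
For each fixed $q$ this is a genuine transport equation with a source term, so the standard $L^p$ estimate along the flow of $u$ (valid for every $p\in[1,\infty]$, the only subtlety at $p=\infty$ being that one argues by characteristics rather than by integrating against $|f_q|^{p-2}f_q$) gives
$$\|f_q(t)\|_{L^p}\le\|\Delta_q f_0\|_{L^p}+\frac1p\int_0^t\|\divergence u(\tau)\|_{L^\infty}\|f_q(\tau)\|_{L^p}\,d\tau+\int_0^t\big(\|\Delta_q g(\tau)\|_{L^p}+\|R_q(\tau)\|_{L^p}\big)\,d\tau,$$
the divergence entering through the integration by parts $\int u\cdot\nabla|f_q|^p=-\int\divergence u\,|f_q|^p$. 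Since $\|\divergence u\|_{L^\infty}\le\|\nabla u\|_{L^\infty}$, this contribution is in both cases harmless and will be absorbed into the exponential prefactor.

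The heart of the matter is the commutator $R_q$, which I would analyze by Bony's decomposition $u\cdot\nabla f=T_{u^j}\partial_j f+T_{\partial_j f}u^j+R(u^j,\partial_j f)$ (summation over $j$). The paraproduct piece $[T_{u^j},\Delta_q]\partial_j f$ obeys, for \emph{every} $s$, a bound of the form $2^{qs}\|\,\cdot\,\|_{L^p}\le C c_q\|\nabla u\|_{L^\infty}\|f\|_{B_{p,r}^s}$ with $(c_q)$ normalized in $\ell^r$, via a first-order Taylor expansion of the kernel of $\Delta_q$ together with Bernstein's inequality. The two remaining pieces behave differently according to the sign of $s$. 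For $s\in(0,1)$ the high--low paraproduct $\Delta_q(T_{\partial_j f}u^j)$ and the remainder $\Delta_q R(u^j,\partial_j f)$ are both summable using only $\|\nabla u\|_{L^\infty}$, which produces the first estimate. For $s\in(-1,0]$ the remainder must be recast through the identity $\sum_j R(u^j,\partial_j f)=\divergence\big(R(u,f)\big)-R(\divergence u,f)$: the first term carries a full gradient and is again controlled by $\|\nabla u\|_{L^\infty}$, while the troublesome piece $R(\divergence u,f)$ forces the stronger norm $\|\divergence u\|_{B_{p,\infty}^{2/p}}$ (with $p\ge2$ so that $B_{p,\infty}^{2/p}$ embeds at the $L^\infty$ scale), and the restriction $s+\frac2p>0$ is exactly the threshold that keeps the high--high interactions landing in $B_{p,r}^s$. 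Altogether one obtains, in the second range,
$$2^{qs}\|R_q\|_{L^p}\le C c_q\big(\|\nabla u\|_{L^\infty}\|f\|_{B_{p,r}^s}+\|\divergence u\|_{B_{p,\infty}^{2/p}}\|f\|_{B_{p,r}^s}\big),$$
and the same bound without the divergence term in the range $s\in(0,1)$.

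Finally I would multiply the blockwise bound by $2^{qs}$, insert the commutator estimate, take the $\ell^r$ norm over $q$, and apply Gr\"onwall's lemma in time with $V$ (respectively $V_p$) as integrating factor; this yields precisely the claimed inequalities with the $e^{CV}$ (respectively $e^{CV_p}$) prefactor. The main obstacle is the commutator estimate in the negative-regularity range $s\in(-1,0]$: one must extract the divergence structure from the remainder and check that the endpoint index $2/p$ together with the condition $s+\frac2p>0$ makes all the dyadic sums $\ell^r$-summable, since here the crude bound by $\|\nabla u\|_{L^\infty}$ alone is no longer available.
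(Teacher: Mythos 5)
Your outline is correct and is the standard Littlewood--Paley commutator proof of this lemma: the paper itself gives no proof, referring instead to \cite{dh04} (see also \cite{bcd11}), where essentially the same argument appears. In particular you identify the right mechanism for part (2) --- rewriting $\sum_j R(u^j,\partial_j f)=\divergence R(u,f)-R(\divergence u,f)$ and using H\"older plus Bernstein (whence $p\ge 2$) to bound the high--high term by $\|\divergence u\|_{B_{p,\infty}^{2/p}}$ under the summability condition $s+\frac2p>0$ --- together with the correct $\frac1p\|\divergence u\|_{L^\infty}$ contribution from the $L^p$ transport estimate.
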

Now, we prove the following result which will be needed later.
\begin{lem}\label{le5} 
Let $v$ be a vector field such that $v\in B_{\infty,1}^{1}\cap L^{2}$ and $\omega$ its vorticity. Then we have
$$\Vert\nabla v\Vert_{L^\infty}\lesssim\Vert v\Vert_{L^2}+\Vert\textnormal{div}\,v\Vert_{B_{\infty,1}^{0}}+\Vert\omega\Vert_{B_{\infty,1}^{0}}.$$
\end{lem}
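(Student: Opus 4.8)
The plan is to reconstruct $\nabla v$ from its divergence and its vorticity via a Biot--Savart type formula, then to split into low and high frequencies so that the singularity of $\Delta^{-1}$ at the origin is isolated in the lowest block and absorbed by the $L^2$ norm. First I would record the two-dimensional algebraic identity: applying $\partial_1,\partial_2$ to $\textnormal{div}\,v$ and to $\omega=\partial_1 v^2-\partial_2 v^1$ gives $\Delta v^1=\partial_1\textnormal{div}\,v-\partial_2\omega$ and $\Delta v^2=\partial_2\textnormal{div}\,v+\partial_1\omega$, so that every entry of $\nabla v$ is of the form $\partial_j\partial_k\Delta^{-1}$ applied to $\textnormal{div}\,v$ or to $\omega$. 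The associated multipliers $\xi_j\xi_k/|\xi|^2$ are homogeneous of degree zero and smooth away from the origin, so the only genuine obstruction to an $L^\infty$ bound sits at the frequency $0$.

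I would then write $\nabla v=\Delta_{-1}\nabla v+\sum_{q\geq0}\Delta_q\nabla v$ and treat the two parts separately. For the low-frequency block, Bernstein's inequality (Lemma \ref{ber}) yields $\Vert\Delta_{-1}\nabla v\Vert_{L^\infty}\lesssim\Vert\Delta_{-1}\nabla v\Vert_{L^2}\lesssim\Vert\Delta_{-1}v\Vert_{L^2}\lesssim\Vert v\Vert_{L^2}$; this is exactly where the $L^2$ norm enters and where the singularity of $\nabla\Delta^{-1}$ is rendered harmless. For the high-frequency blocks $q\geq0$, I would factor $\Delta_q\,\partial_j\partial_k\Delta^{-1}f=g_q\ast(\Delta_q f)$ by inserting an enlarged cutoff $\tilde\varphi$ with $\tilde\varphi\equiv1$ on $\mathrm{supp}\,\varphi$, where $\widehat{g_q}(\xi)=\tilde\varphi(2^{-q}\xi)\,\xi_j\xi_k/|\xi|^2$. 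Since the symbol is homogeneous of degree zero, a scaling change of variables gives $g_q(x)=2^{2q}g_0(2^q x)$, hence $\Vert g_q\Vert_{L^1}=\Vert g_0\Vert_{L^1}$ uniformly in $q$. By Young's inequality this produces $\Vert\Delta_q\nabla v\Vert_{L^\infty}\lesssim\Vert\Delta_q\textnormal{div}\,v\Vert_{L^\infty}+\Vert\Delta_q\omega\Vert_{L^\infty}$, uniformly in $q\geq0$.

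Summing the high-frequency estimate over $q\geq0$ gives precisely $\Vert\textnormal{div}\,v\Vert_{B_{\infty,1}^{0}}+\Vert\omega\Vert_{B_{\infty,1}^{0}}$, while the hypothesis $v\in B_{\infty,1}^{1}$ guarantees convergence of the series and thus legitimizes the pointwise decomposition $\nabla v=\sum_q\Delta_q\nabla v$. Adding this to the low-frequency bound yields the stated inequality. I expect the only delicate point to be the low-frequency block: the operator $\nabla\Delta^{-1}$ is not bounded on $L^\infty$, which is exactly why the term $\Vert v\Vert_{L^2}$ is unavoidable on the right-hand side; the uniform-in-$q$ kernel bound $\Vert g_q\Vert_{L^1}\lesssim1$ is then routine via the scaling argument.
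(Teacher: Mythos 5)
Your proof is correct and follows essentially the same route as the paper: the identity $\Delta v^1=\partial_1\textnormal{div}\,v-\partial_2\omega$, $\Delta v^2=\partial_2\textnormal{div}\,v+\partial_1\omega$ is just the Helmholtz splitting $v=\mathbb{P}v+\mathbb{Q}v$ used there, written componentwise. In both arguments the block $\Delta_{-1}$ is absorbed by $\Vert v\Vert_{L^2}$ via Bernstein, while the blocks $q\ge 0$ are handled by the uniform boundedness on each dyadic annulus of the degree-zero multipliers $\xi_j\xi_k/|\xi|^2$, producing exactly $\Vert\textnormal{div}\,v\Vert_{B_{\infty,1}^{0}}+\Vert\omega\Vert_{B_{\infty,1}^{0}}$.
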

\begin{proof}
We split the velocity into incompressible and compressible parts: $v=\mathbb{P}v+\mathbb{Q}v.$ Then  we have 
$$\textnormal{curl}\,v=\textnormal{curl}\,\mathbb{P}\,v$$ 

We have by Bernstein ineguality, the continuity of $\dot{\Delta}_{q}\mathbb{P}:L^p\to L^p,$ $\forall p\in[1,\infty]$ uniformly in $q$ and $\Vert\dot{\Delta}_{q}v\Vert_{L^\infty}\sim 2^{-q}\Vert\dot{\Delta}_{q}\omega\Vert_{L^\infty},$ that
\begin{eqnarray}\label{homo}
\nonumber \Vert\nabla\mathbb{P}v\Vert_{L^\infty}&\le& \Vert\dot{\Delta}_{-1}\nabla\mathbb{P}v\Vert_{L^\infty} +\sum_{q\in\NN}\Vert\dot{\Delta}_{q}\nabla\mathbb{P}v\Vert_{L^\infty}\\
\nonumber&\lesssim& \Vert\dot{\Delta}_{-1}\mathbb{P}v\Vert_{L^2} +\sum_{q\in\NN}2^{q}\Vert\dot{\Delta}_{q}\mathbb{P}v\Vert_{L^\infty}\\
\nonumber&\lesssim& \Vert v\Vert_{L^2}+\sum_{q\in\NN}\Vert\Delta_{q}\omega\Vert_{L^\infty}\\
&\lesssim& \Vert v\Vert_{L^2}+\Vert\omega\Vert_{B_{\infty,1}^{0}}.
\end{eqnarray}
On the other hand, using Bernstein inequality leads to
\begin{eqnarray*}
\Vert\nabla \mathbb{Q}v\Vert_{L^\infty}
&=&\|\nabla^2\Delta^{-1}\textnormal{div } v \Vert_{L^\infty}\\
&\leq& \|\Delta_{-1}\nabla^2\Delta^{-1}\textnormal{div } v \Vert_{L^\infty}+\sum_{q\geq0}\|\Delta_q\nabla^2\Delta^{-1}\textnormal{div } v \Vert_{L^\infty}\\
&\lesssim& \|v\|_{L^2}+\|\textnormal{div} \,v\|_{B_{\infty,1}^0}.
\end{eqnarray*}
Combining this estimate with  \eqref{homo}, gives
\begin{eqnarray*}
\Vert\nabla v\Vert_{L^\infty}
\lesssim \Vert v\Vert_{L^2}+\Vert\textnormal{div}\,v\Vert_{B_{\infty,1}^{0}}+\Vert\omega\Vert_{B_{\infty,1}^{0}}.
\end{eqnarray*}
This achieves the proof of the lemma.
\end{proof}

\section{Preliminaries}
This section is devoted to some  useful estimates for the system \eqref{C1} that will be crucial for the proof of the main results. We will discuss  some energy estimates for the full system \eqref{C1}   and  give some Strichartz estimates for the acoustic operator.
\subsection{Energy estimates}
Here we list two energy estimates for \eqref{C1}. The first one  is very classical and is concerned with the $L^2$-estimate. However the second one deals with the energy estimates in the heterogeneous Besov space $B_{2,1}^{s,\Psi}.$ All the spaces of the initial data are constructed over the Lebesgue space $L^2$ in order to remove the singular  terms  and get 	 uniform estimates with respect to the Mach number $\varepsilon.$ For the proof, see for instance \cite{Hmidi-comp}.
\begin{prop}\label{f1}
Let $(\vepsilon,\cepsilon)$  be a smooth solution of \eqref{C1}  and $\Psi\in \mathcal{U}$, see the Definition \ref{heter}. 
\begin{enumerate}
\item $L^2-$estimate: there exists $C>0$ such that $\forall t\geq0$
$$
\|(\vepsilon,\cepsilon)(t)\|_{L^2}\le C\|(\vepsiloninit,\cepsiloninit)\|_{L^2}e^{C\|\divergence\vepsilon\|_{L^1_tL^\infty}}.
$$
\item Besov estimates: for $s>0$ and $r\in[1,+\infty],$ there exists $C>0$ such that
$$
\|(\vepsilon,\cepsilon)(t)\|_{B_{2,r}^{s,\Psi}}\le C\|(\vepsiloninit,\cepsiloninit)\|_{B_{2,r}^{s,\Psi}}\,e^{CV_\varepsilon(t)}
$$ 
with
$$V_\varepsilon(t):=\|\nabla\vepsilon\|_{L^1_tL^\infty}+\|\nabla \cepsilon\|_{L^1_tL^\infty}. 
$$
\end{enumerate}
\end{prop}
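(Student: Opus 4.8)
The plan is the energy method, the whole point being that the $\frac{1}{\varepsilon}$ penalization is \emph{skew-adjoint} and therefore invisible to $L^2$-type estimates, which is exactly what produces bounds uniform in the Mach number. Introduce the constant-coefficient Fourier multiplier $L(v,c):=(\nabla c,\textnormal{div}\,v)$, so that the singular terms in \eqref{C1} are precisely $\frac1\varepsilon L(\vepsilon,\cepsilon)$. Since $\langle\nabla c,v\rangle_{L^2}=-\langle c,\textnormal{div}\,v\rangle_{L^2}$ by integration by parts, $L$ is skew-adjoint on $L^2(\RR^2)$ and, being a Fourier multiplier, it commutes with every $\Delta_q$. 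For part (1) I would take the $L^2$ scalar product of the first equation of \eqref{C1} with $\vepsilon$ and of the second with $\cepsilon$ and add them: the singular contribution cancels exactly, the convective terms are rewritten through $\langle v\cdot\nabla f,f\rangle=-\tfrac12\langle|f|^2,\textnormal{div}\,v\rangle$, and the coupling $\bar\gamma\langle\cepsilon\nabla\cepsilon,\vepsilon\rangle+\bar\gamma\langle\cepsilon\,\textnormal{div}\,\vepsilon,\cepsilon\rangle$ collapses, after one integration by parts, into a multiple of $\langle\cepsilon^2,\textnormal{div}\,\vepsilon\rangle$. Hence
$$\frac{d}{dt}\|(\vepsilon,\cepsilon)\|_{L^2}^2\lesssim\|\textnormal{div}\,\vepsilon\|_{L^\infty}\,\|(\vepsilon,\cepsilon)\|_{L^2}^2,$$
and Gronwall's lemma closes (1); note that here the coupling term is fully integrated away, so only $\|\textnormal{div}\,\vepsilon\|_{L^\infty}$ appears.

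For part (2) I would localize in frequency by applying $\Delta_q$ to \eqref{C1}. Keeping the coefficient $\bar\gamma\cepsilon$ in front of $\nabla\Delta_q\cepsilon$ and $\textnormal{div}\,\Delta_q\vepsilon$ on the left-hand side, the right-hand sides become the transport commutator $[\Delta_q,\vepsilon\cdot\nabla]$ applied to $\vepsilon$ and $\cepsilon$, together with the commutators $[\Delta_q,\cepsilon\nabla]\cepsilon=[\Delta_q,\cepsilon]\nabla\cepsilon$ and $[\Delta_q,\cepsilon\,\textnormal{div}]\vepsilon$; the constant $\frac1\varepsilon$ generates no commutator since it commutes with $\Delta_q$. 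Running the $L^2$ energy estimate on the block $(\Delta_q\vepsilon,\Delta_q\cepsilon)$: the singular terms cancel again (because $L$ is skew-adjoint and commutes with $\Delta_q$), the convective terms contribute $\|\textnormal{div}\,\vepsilon\|_{L^\infty}$, while the coupling terms are now \emph{not} fully antisymmetric at the block level — one integration by parts leaves the defect $-\bar\gamma\langle\Delta_q\cepsilon,\,\Delta_q\vepsilon\cdot\nabla\cepsilon\rangle$, which is bounded by $\|\nabla\cepsilon\|_{L^\infty}\|(\Delta_q\vepsilon,\Delta_q\cepsilon)\|_{L^2}^2$. This is precisely the mechanism forcing $\|\nabla\cepsilon\|_{L^1_tL^\infty}$ (and not merely $\|\textnormal{div}\,\vepsilon\|$) into $V_\varepsilon$. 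One thus obtains
$$\frac{d}{dt}\|(\Delta_q\vepsilon,\Delta_q\cepsilon)\|_{L^2}\lesssim\big(\|\nabla\vepsilon\|_{L^\infty}+\|\nabla\cepsilon\|_{L^\infty}\big)\|(\Delta_q\vepsilon,\Delta_q\cepsilon)\|_{L^2}+R_q(t),$$
where $R_q$ gathers the $L^2$-norms of the four commutators.

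It then remains to integrate in time, multiply by $\Psi(q)2^{qs}$, take the $\ell^r$ norm in $q$, and apply Gronwall. The real work is to control the weighted commutator sum, namely to show that after weighting and summation $R_q$ is dominated by $\big(\|\nabla\vepsilon\|_{L^\infty}+\|\nabla\cepsilon\|_{L^\infty}\big)\|(\vepsilon,\cepsilon)\|_{B_{2,r}^{s,\Psi}}$. In the classical (unweighted) spaces this is the standard transport commutator estimate $\|2^{qs}\|[\Delta_q,\vepsilon\cdot\nabla]f\|_{L^2}\|_{\ell^r}\lesssim\|\nabla\vepsilon\|_{L^\infty}\|f\|_{B_{2,r}^s}+\|\nabla f\|_{L^\infty}\|\vepsilon\|_{B_{2,r}^s}$ (for $s>0$), together with the analogous bound for $[\Delta_q,\cepsilon]\nabla\cepsilon$ in terms of $\|\nabla\cepsilon\|_{L^\infty}$, both proved by Bony's paraproduct decomposition.

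The main obstacle, and the only point where the profile $\Psi$ genuinely enters, is upgrading these estimates to the heterogeneous spaces $B_{2,r}^{s,\Psi}$. Here I would invoke condition (2) in the definition of the class $\mathcal U$: since $\Psi(q+1)/\Psi(q)\le C$, one has $\Psi(q)\sim\Psi(q')$ whenever $|q-q'|\le N_0$ for a fixed $N_0$. Because the paraproduct and commutator decompositions only couple dyadic blocks whose indices differ by a bounded amount, every occurrence of $\Psi$ at a shifted index can be replaced by $\Psi(q)$ at the cost of a harmless fixed constant, so the classical proofs transfer verbatim to the weighted setting. This slow-variation property of $\Psi$ is exactly why condition (2) was imposed; once the weighted commutator estimates are in hand, the Gronwall argument closes and yields the stated bound with exponent $CV_\varepsilon(t)$.
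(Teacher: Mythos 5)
Your proof is correct and is the standard energy-method argument; the paper itself does not prove Proposition~\ref{f1} but defers to \cite{Hmidi-comp}, and your computation (skew-adjointness of the $\frac1\varepsilon$-penalization, the exact cancellation of the coupling terms in the $L^2$ estimate, frequency localization with commutators and the defect term producing $\|\nabla\cepsilon\|_{L^\infty}$) is precisely the intended route. One small imprecision: in the remainder term of Bony's decomposition the block $\Delta_q$ is coupled to \emph{all} blocks $\Delta_{q'}$ with $q'\ge q-N_0$, not only to nearby ones, so transferring the weight there uses the monotonicity of $\Psi$ (to get $\Psi(q)\le C\Psi(q')$) together with the convergence of $\sum_{q'\ge q-N_0}2^{(q-q')s}$ for $s>0$, and not merely the slow-variation condition (2); this is harmless but worth stating.
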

We emphasize that for $\Psi\equiv1$ the space $B_{2,r}^{s,\Psi}$ reduces to the classical Besov space $B_{2,r}^{s}.$ Therefore we get the known energy estimate, see for instance \cite{dh04},
\begin{equation}\label{22-fev}
\|(\vepsilon,\cepsilon)(t)\|_{B_{2,r}^{s}}\le C\|(\vepsiloninit,\cepsiloninit)\|_{B_{2,r}^{s}}\,e^{CV_\varepsilon(t)}.
\end{equation}  
\subsection{Strichartz estimates}
Our goal is to  establish some classical  Strichartz estimates for the acoustic parts which are governed by wave equations. Roughly speaking, we will show that  the averaging in time of the compressible part of the velocity $v_\varepsilon$ and the sound speed $c_\varepsilon$ will lead to vanishing quantities  when $\varepsilon$ goes to zero. We point out that  Ukai \cite{u86} used this fact but under its dispersion form  in order to study the incompressible limit in the framework of the ill-prepared initial data. There are various ways to exhibit the wave structure and we prefer here the complex version.
We start with rewriting the system \eqref{C1} under the form:
\begin{equation}\label{t3} 
\left\{ \begin{array}{l} 
\partial_{t}v_{\varepsilon}+\frac{1}{\varepsilon}\nabla c_{\varepsilon}=-v_{\varepsilon}\cdot\nabla v_{\varepsilon}-\bar\gamma c_{\varepsilon}\nabla c_{\varepsilon}:=f_{\varepsilon}\\ 
\partial_{t}c_{\varepsilon}+\frac{1}{\varepsilon}\textnormal{div}\,v_{\varepsilon}=-v_{\varepsilon}\cdot\nabla c_{\varepsilon}-\bar\gamma c_{\varepsilon}\textnormal{div}\,v_{\varepsilon}:=g_{\varepsilon}\\
(v_{\varepsilon},c_{\varepsilon})_{| t=0}=(v_{0,\varepsilon},c_{0,\varepsilon}).\\  
\end{array} \right.
\end{equation}
Denote by $\mathbb{Q}v:=\nabla\Delta^{-1}\textnormal{div}\,v$ the compressible part of the velocity $v.$ Then the quantity
$$\Gamma_{\varepsilon}:=\mathbb{Q}v_{\varepsilon}-i\nabla\vert\DD\vert^{-1}c_{\varepsilon},\qquad\textnormal{with}\;\;\vert\DD\vert=(-\Delta)^{\frac12}$$
satisfies the following wave equation
\begin{equation}\label{t4}
(\partial_{t}+\frac{i}{\varepsilon}\vert\DD\vert)\Gamma_{\varepsilon}=\mathbb{Q}f_{\varepsilon}-i\nabla\vert\DD\vert^{-1}g_{\varepsilon}.
\end{equation}
Similarly, we can easily check that the quantity
$$\Upsilon_{\varepsilon}:=\vert\DD\vert^{-1}\textnormal{div}\,v_{\varepsilon}+i{ c}_{\varepsilon}$$
obeys to the wave equation
\begin{equation}\label{t5}
(\partial_{t}+\frac{i}{\varepsilon}\vert\DD\vert)\Upsilon_{\varepsilon}=\vert\DD\vert^{-1}\textnormal{div}\,f_{\varepsilon}+i g_{\varepsilon}.
\end{equation}
Now, we are in a position  to use of the following Strichartz estimates stated in space dimension two and for the proof see for instance \cite{bcd11,dh04,gv95}.
\begin{lem}\label{le1} Let $\psi$ be a  solution of the wave equation
$$(\partial_{t}+\frac{i}{\varepsilon}\vert\DD\vert)\psi=G,\qquad \psi_{| t=0}=\psi_{0}.$$
Then there exists an absolute  constant $C$  such that for every $T>0,$
$$\Vert\psi\Vert_{L_{T}^{r}L^p}\le C\varepsilon^{\frac 14-\frac{1}{2p}}\big(\Vert\psi_{0}\Vert_{\dot{B}_{2,1}^{\frac34-\frac{3}{2p}}}+\Vert G\Vert_{L_{T}^{1}\dot{B}_{2,1}^{\frac34-\frac{3}{2p}}}\big),$$
 for all $p\in[2,+\infty]$ and $r=4+\frac{8}{p-2}$.
\end{lem}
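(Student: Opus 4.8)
The plan is to treat Lemma \ref{le1} as a rescaled version of the standard (that is, $\varepsilon=1$) Strichartz estimate for the half-wave group $e^{-it|\DD|}$ in dimension two, so that the entire $\varepsilon$-dependence is produced by a single change of the time variable. First I would write the solution through Duhamel's formula,
\[
\psi(t)=e^{-i\frac{t}{\varepsilon}|\DD|}\psi_{0}+\int_{0}^{t}e^{-i\frac{t-s}{\varepsilon}|\DD|}G(s)\,ds,
\]
and observe that, by the Minkowski inequality applied to the time integral, the inhomogeneous term is controlled as soon as the homogeneous bound
\[
\big\|e^{-i\frac{t}{\varepsilon}|\DD|}\psi_{0}\big\|_{L^{r}_{T}L^{p}}\lesssim \varepsilon^{\frac14-\frac{1}{2p}}\|\psi_{0}\|_{\dot{B}_{2,1}^{\frac34-\frac{3}{2p}}}
\]
is known with a constant uniform in the length $T$ of the interval. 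Setting $\tau=t/\varepsilon$ turns $e^{-i\frac{t}{\varepsilon}|\DD|}\psi_{0}$ into the value at time $\tau$ of the unit-speed half-wave evolution, and the Jacobian of this substitution produces exactly the prefactor $\varepsilon^{1/r}=\varepsilon^{\frac14-\frac{1}{2p}}$, since $r=\frac{4p}{p-2}$ yields $\frac1r=\frac14-\frac{1}{2p}$. Hence it suffices to establish the $\varepsilon=1$ estimate on an arbitrary time interval.

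For the unit-speed estimate I would localize in frequency and argue dyadically. On a block $\dot\Delta_{q}$ the conservation of the $L^{2}$ norm, $\|e^{it|\DD|}\dot\Delta_{q}f\|_{L^{2}}=\|\dot\Delta_{q}f\|_{L^{2}}$, supplies the energy bound, while stationary phase applied to the kernel $\int e^{i(x\cdot\xi+t|\xi|)}\varphi(2^{-q}\xi)\,d\xi$ (the phase being non-degenerate in the one angular direction on the circle) furnishes the dispersive decay
\[
\big\|e^{it|\DD|}\dot\Delta_{q}f\big\|_{L^{\infty}}\lesssim 2^{\frac32 q}\,|t|^{-\frac12}\,\|\dot\Delta_{q}f\|_{L^{1}}.
\]
Interpolating this $L^{1}\to L^{\infty}$ bound with the $L^{2}\to L^{2}$ identity, and then feeding the resulting $L^{p'}\to L^{p}$ decay into the Ginibre--Velo $TT^{*}$ argument (Hardy--Littlewood--Sobolev in time), gives the fixed-block Strichartz inequality
\[
\big\|e^{it|\DD|}\dot\Delta_{q}f\big\|_{L^{r}_{T}L^{p}}\lesssim 2^{q\left(\frac34-\frac{3}{2p}\right)}\|\dot\Delta_{q}f\|_{L^{2}},
\]
the scaling exponent being $\gamma=\frac{d}{2}-\frac{d}{p}-\frac1r=\frac34-\frac{3}{2p}$ in $d=2$. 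The pair $(r,p)$ saturates the two-dimensional wave-admissibility relation $\frac1r+\frac{1}{2p}=\frac14$, but since $r\ge4>2$ one stays away from the delicate Keel--Tao endpoint $r=2$ (which is in any case inadmissible in $d=2$), so the classical $TT^{*}$ scheme applies without further difficulty.

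Finally I would sum the blocks. Since $r,p\ge1$, the triangle inequality gives $\|e^{it|\DD|}f\|_{L^{r}_{T}L^{p}}\le\sum_{q}\|e^{it|\DD|}\dot\Delta_{q}f\|_{L^{r}_{T}L^{p}}\lesssim\sum_{q}2^{q(\frac34-\frac{3}{2p})}\|\dot\Delta_{q}f\|_{L^{2}}=\|f\|_{\dot{B}_{2,1}^{\frac34-\frac{3}{2p}}}$, which is exactly the unit-speed homogeneous estimate; combined with the Duhamel reduction and the rescaling, this closes the argument. The one point requiring genuine care --- and the reason the statement is phrased with the $\ell^{1}$-based space $\dot{B}_{2,1}^{\gamma}$ rather than $\dot H^{\gamma}=\dot B_{2,2}^{\gamma}$ --- is precisely this summation on the admissibility line and up to $p=\infty$: the crude $\ell^{1}$ gluing cannot there be replaced by an $\ell^{2}$ (almost-orthogonality) argument without loss, so the $B_{2,1}$ regularity is what makes the endpoint case $p=\infty$ work. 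The stationary-phase decay and the $TT^{*}$ bookkeeping, by contrast, are entirely standard.
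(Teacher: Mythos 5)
Your argument is correct and complete: the time rescaling $\tau=t/\varepsilon$ does produce exactly $\varepsilon^{1/r}=\varepsilon^{\frac14-\frac{1}{2p}}$, the frequency-localized dispersive bound $2^{\frac32 q}|t|^{-\frac12}$ with the $TT^{*}$/Hardy--Littlewood--Sobolev step gives the per-block loss $2^{q(\frac34-\frac{3}{2p})}$ on the admissibility line $\frac1r+\frac1{2p}=\frac14$, and the $\ell^{1}$ summation is precisely why the statement is phrased in $\dot B_{2,1}^{\gamma}$. The paper does not prove this lemma but refers to \cite{bcd11,dh04,gv95}, and your proof is the standard argument carried out in those references, so there is nothing to add.
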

This result  enables  us to get the following estimates.
\begin{prop}\label{pr2}
Let $(v_{0,\varepsilon},c_{0,\varepsilon})$ be a bounded family in $ B_{2,1}^{\frac{7}{4}}$. Then any smooth solution of \eqref{C1} defined in the time interval $[0,T]$ satisfies,
$$\Vert(\mathbb{Q}v_{\varepsilon}, c_{\varepsilon})\Vert_{L_{T}^{4}L^\infty}\le C_{0}\varepsilon^{\frac 14}(1+T)e^{CV_{\varepsilon}(T)} ,$$
where  $C_{0}$ depends on the quantities $\displaystyle{\sup_{0<\varepsilon\le1}\Vert(v_{0,\varepsilon},c_{0,\varepsilon})\Vert_{B_{2,1}^{\frac 74}}}$ and 
$$V_{\varepsilon}(T):=\Vert\nabla v_{\varepsilon}\Vert_{L_{T}^{1}L^\infty}+\Vert\nabla c_{\varepsilon}\Vert_{L_{T}^{1}L^\infty}.$$
\end{prop}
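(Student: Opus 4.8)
The plan is to estimate the two acoustic unknowns separately by exploiting the complex wave formulation \eqref{t4}--\eqref{t5}. First I would observe that $\mathbb{Q}\vepsilon=\textnormal{Re}\,\Gamma_\varepsilon$ and $\cepsilon=\textnormal{Im}\,\Upsilon_\varepsilon$, since $\nabla|\DD|^{-1}$ and $|\DD|^{-1}\divergence$ send real functions to real functions. Hence $\|\mathbb{Q}\vepsilon\|_{L^4_TL^\infty}\le\|\Gamma_\varepsilon\|_{L^4_TL^\infty}$ and $\|\cepsilon\|_{L^4_TL^\infty}\le\|\Upsilon_\varepsilon\|_{L^4_TL^\infty}$, so it suffices to bound the two complex quantities. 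To each I apply Lemma \ref{le1} at the endpoint $p=\infty$, for which $r=4$, the power of $\varepsilon$ equals $\tfrac14$ and the regularity index equals $\tfrac34$, yielding
\[
\|\Gamma_\varepsilon\|_{L^4_TL^\infty}\le C\varepsilon^{\frac14}\Big(\|\Gamma_\varepsilon(0)\|_{\dot B_{2,1}^{3/4}}+\big\|\mathbb{Q}f_\varepsilon-i\nabla|\DD|^{-1}g_\varepsilon\big\|_{L^1_T\dot B_{2,1}^{3/4}}\Big),
\]
and the analogous bound for $\Upsilon_\varepsilon$ with source $|\DD|^{-1}\divergence f_\varepsilon+ig_\varepsilon$, where $f_\varepsilon,g_\varepsilon$ are as in \eqref{t3}.

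For the initial data, the order-zero Fourier multipliers $\mathbb{Q}$, $\nabla|\DD|^{-1}$ and $|\DD|^{-1}\divergence$ are bounded on $\dot B_{2,1}^{3/4}$, so $\|\Gamma_\varepsilon(0)\|_{\dot B_{2,1}^{3/4}}+\|\Upsilon_\varepsilon(0)\|_{\dot B_{2,1}^{3/4}}\lesssim\|(\vepsiloninit,\cepsiloninit)\|_{\dot B_{2,1}^{3/4}}$. Splitting the homogeneous norm into low and high frequencies and summing the resulting geometric series gives $\|u\|_{\dot B_{2,1}^{3/4}}\lesssim\|u\|_{L^2}+\|u\|_{B_{2,1}^{7/4}}\lesssim\|u\|_{B_{2,1}^{7/4}}$; since the family is bounded in $B_{2,1}^{7/4}$, this is controlled by $C_0$.

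The heart of the matter is the source term. As the multipliers are order zero it is enough to bound $\|f_\varepsilon\|_{\dot B_{2,1}^{3/4}}$ and $\|g_\varepsilon\|_{\dot B_{2,1}^{3/4}}$, and every summand is a product of the form (function)$\times$(first derivative). Applying the tame product estimate (valid since $\tfrac34>0$) to, for instance, $\vepsilon\cdot\nabla\vepsilen$ and using $\|\nabla\vepsilen\|_{\dot B_{2,1}^{3/4}}\sim\|\vepsilen\|_{\dot B_{2,1}^{7/4}}$ together with the embeddings $\dot B_{2,1}^{3/4},\dot B_{2,1}^{7/4}\hookleftarrow B_{2,1}^{7/4}$ and $B_{2,1}^{7/4}\hookrightarrow L^\infty$, I get
\[
\|\vepsilen\cdot\nabla\vepsilen\|_{\dot B_{2,1}^{3/4}}\lesssim\big(\|\vepsilen\|_{B_{2,1}^{7/4}}+\|\nabla\vepsilen\|_{L^\infty}\big)\|\vepsilen\|_{B_{2,1}^{7/4}},
\]
and similarly for the remaining quadratic terms in $\cepsilon$. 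Integrating in time and invoking the energy bound of Proposition \ref{f1}(2) (equivalently \eqref{22-fev}), namely $\sup_{[0,T]}\|(\vepsilen,\cepsilon)\|_{B_{2,1}^{7/4}}\le C_0e^{CV_\varepsilon(T)}$, leads to
\[
\int_0^T\|f_\varepsilon\|_{\dot B_{2,1}^{3/4}}\,dt\lesssim C_0^2\,T\,e^{2CV_\varepsilon(T)}+C_0\,V_\varepsilon(T)\,e^{CV_\varepsilon(T)}\lesssim C_0(1+T)e^{CV_\varepsilon(T)},
\]
using $xe^{Cx}\lesssim e^{C'x}$ and relabelling $C_0$ (which may depend nonlinearly on the data norm) and $C$. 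Feeding this and the initial-data bound into the Strichartz inequality gives the claim.

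I expect the main obstacle to be closing the nonlinear estimate in the scaling-critical homogeneous space $\dot B_{2,1}^{3/4}$, which in dimension two (critical index $1$) is not an algebra: one must retain the $B_{2,1}^{7/4}$ regularity of the solution and arrange the product estimates so that exactly one derivative of the fields is measured in $L^\infty$ (thereby feeding $V_\varepsilon$) while the extra regularity is absorbed by the $\tfrac74$ norm. The uniformity of all constants in the Mach number $\varepsilon$ hinges on matching the low-frequency part of the homogeneous norm with the $L^2$ information, which is precisely why the whole scale of spaces is built over $L^2$.
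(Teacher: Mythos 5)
Your argument is correct and follows essentially the same route as the paper: reduce to the complex unknowns $\Gamma_\varepsilon$ and $\Upsilon_\varepsilon$, apply the Strichartz estimate of Lemma \ref{le1} at $p=\infty$, $r=4$, control the quadratic source terms by a product law in the $\tfrac34$-regularity Besov space together with the embedding $B_{2,1}^{7/4}\hookrightarrow L^\infty$, and close with the energy estimate \eqref{22-fev}. The only cosmetic difference is that you keep the tame form of the product estimate in the homogeneous space (absorbing the resulting $V_\varepsilon e^{CV_\varepsilon}$ factor), whereas the paper passes immediately to the inhomogeneous norm and uses the symmetric product law $\Vert f\cdot\nabla g\Vert_{B_{2,1}^{3/4}}\lesssim\Vert f\Vert_{L^\infty}\Vert g\Vert_{B_{2,1}^{7/4}}+\Vert g\Vert_{L^\infty}\Vert f\Vert_{B_{2,1}^{7/4}}$; both yield the stated bound.
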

\begin{proof} 
We apply Lemma \ref{le1} to the equation \eqref{t4} with $p=+\infty,$ then
$$\Vert\Gamma_{\varepsilon}\Vert_{L_{T}^{4}L^\infty}\lesssim\varepsilon^{\frac 14}\big(\Vert\Gamma_{\varepsilon}^{0}\Vert_{\dot{B}_{2,1}^{\frac 34}}+\Vert\mathbb{Q}f_{\varepsilon} -i\nabla\vert\DD\vert^{-1}g_{\varepsilon}\Vert_{L_{T}^{1}\dot{B}_{2,1}^{\frac 34}}\big).$$
Using the continuity of the Riesz operators $\mathbb{Q}$ and $\nabla\vert\DD\vert^{-1}$ on the homogeneous Besov spaces, combined with the embedding $B_{2,1}^{\frac 34}\hookrightarrow\dot{B}_{2,1}^{\frac 34}$ and H\"older inequality
\begin{eqnarray}\label{t6}
\nonumber \Vert\Gamma_{\varepsilon}\Vert_{L_{T}^{4}L^\infty}&\lesssim& \varepsilon^{\frac{1}{4}}\Big(\Vert(v_{0,\varepsilon},c_{0,\varepsilon})\Vert_{\dot{B}_{2,1}^{\frac 34}}+\Vert(f_{\varepsilon},g_{\varepsilon})\Vert_{L_{T}^{1}\dot{B}_{2,1}^{\frac{3}{4}}}\Big)\\
&\lesssim& \varepsilon^{\frac{1}{4}}\Big(\Vert(v_{0,\varepsilon},c_{0,\varepsilon})\Vert_{B_{2,1}^{\frac 34}}+T\Vert(f_{\varepsilon},g_{\varepsilon})\Vert_{L_{T}^{\infty}B_{2,1}^{\frac{3}{4}}}\Big).
\end{eqnarray}
It remains to estimate $\Vert(f_{\varepsilon},g_{\varepsilon})\Vert_{L_{T}^{\infty}B_{2,1}^{\frac{3}{4}}}.$ For this purpose we use the following law product which can be proved for example by using Bony's decomposition :
$$\Vert f\cdot\nabla g\Vert_{B_{2,1}^{\frac{3}{4}}}\lesssim\Vert f\Vert_{L^\infty}\Vert g\Vert_{B_{2,1}^{\frac{7}{4}}}+\Vert g\Vert_{L^\infty}\Vert f\Vert_{B_{2,1}^{\frac{7}{4}}}.$$
Thus we get
\begin{eqnarray*}
\Vert(f_{\varepsilon},g_{\varepsilon})\Vert_{B_{2,1}^{\frac{3}{4}}}&\lesssim& \Vert(v_{\varepsilon},c_{\varepsilon})\Vert_{L^\infty}\Vert(v_{\varepsilon},c_{\varepsilon}) \Vert_{B_{2,1}^{\frac{7}{4}}}\\
&\lesssim& \Vert(v_{\varepsilon},c_{\varepsilon})\Vert^{2}_{B_{2,1}^{\frac{7}{4}}},
\end{eqnarray*}
where we have used in the last line the embedding $B_{2,1}^{\frac{7}{4}}\hookrightarrow L^\infty.$ Therefore \eqref{22-fev} \mbox{yields to}
$$\Vert(f_{\varepsilon},g_{\varepsilon})\Vert_{L_{T}^{\infty}B_{2,1}^{\frac{3}{4}}}\lesssim\Vert(v_{0,\varepsilon},c_{0,\varepsilon})\Vert^{2}_{B_{2,1}^{\frac{7}{4}}}e^{CV_{\varepsilon}(T)}.$$
Plugging this inequality into \eqref{t6}, we obtain
\begin{eqnarray*}
\Vert\Gamma_{\varepsilon}\Vert_{L_{T}^{4}L^\infty}&\lesssim& \varepsilon^{\frac{1}{4}}\big(\Vert(v_{0,\varepsilon},c_{0,\varepsilon})\Vert_{B_{2,1}^{\frac{3}{4}}}+T\Vert(v_{0,\varepsilon},c_{0,\varepsilon})\Vert^{2}_{B_{2,1}^{\frac{7}{4}}}e^{CV_{\varepsilon}(T)}\big)\\
&\lesssim& C_{0}\varepsilon^{\frac{1}{4}}(1+T)e^{CV_{\varepsilon}(T)}.
\end{eqnarray*}
Remark that the real part of $\Gamma_{\varepsilon}$ is the compressible part of the velocity $v_{\varepsilon},$ then
$$\Vert\mathbb{Q}v_{\varepsilon}\Vert_{L_{T}^{4}L^\infty}\lesssim C_{0}\varepsilon^{\frac{1}{4}}(1+T)e^{CV_{\varepsilon}(T)}.$$
We can prove in the similar way that,
$$\Vert\Upsilon_{\varepsilon}\Vert_{L_{T}^{4}L^\infty}\lesssim C_{0}\varepsilon^{\frac{1}{4}}(1+T)e^{CV_{\varepsilon}(T)}$$
and therefore
$$\Vert c_{\varepsilon}\Vert_{L_{T}^{4}L^\infty}\lesssim C_{0}\varepsilon^{\frac{1}{4}}(1+T)e^{CV_{\varepsilon}(T)}.$$
The proof of the Proposition \ref{pr2} is now achieved.
\end{proof}

\section{ Logarithmic estimate} The purpose of this paragraph is to study the linear growth of the norm $B_{\infty,1}^{0}$ for the following compressible transport model,
\begin{equation}\label{x8} 
\left\{ \begin{array}{l}
\partial_t f+v\cdot\nabla f+f\textnormal{div}\,v=0\\
f_{| t=0}=f_{0}.  
\end{array} \right.
\end{equation} 
We point out that the dynamics of the vorticity  for the system \eqref{C1} is governed by an equation of type (\ref{x8}). In the framework of the incompressible vector fields, that is $\textnormal{div }v=0$, Vishik \cite{vis} established the following linear growth for Besov regularity with zero index,
$$
\|f(t)\|_{B_{\infty,1}^0}\le C\|f_0\|_{B_{\infty,1}^0}\Big( 1+\int_0^t\|\nabla v(\tau)\|_{L^\infty}d\tau\Big).
$$
A simple proof for this result was given in \cite{hmid-kera} and where we extended  the Vishik's result to a transport-diffusion model. Our main goal here is to valid the previous linear growth  to the compressible model and this is   very crucial for the proof of the Theorem \ref{theo2}. 
Our result reads as follows, 
\begin{thm}\label{theo3}
Let $v$ be a smooth vector field and $f$ be a smooth solution of the transport \mbox{equation \eqref{x8}.}
Then for every $1\le p<+\infty$, there exists a constant $C$ depending  only on $p$  such that
$$\|f(t)\|_{B_{\infty,1}^0}\le C \|f_0\|_{B_{\infty,1}^0}\Big(1+{e^{C\|\nabla v\|_{L^1_tL^\infty}}\|\textnormal{div}\,v\|_{L^1_tB_{p,1}^{\frac2p}}^2}\Big)\Big(1+\int_0^t\|\nabla v(\tau)\|_{L^\infty}d\tau\Big).$$
\end{thm}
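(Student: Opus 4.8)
The plan is to convert the compressible transport equation \eqref{x8} into a genuine source-free transport equation by an integrating-factor (filtration) argument, apply the Vishik-type linear growth estimate to the filtered unknown, and then pay for the reduction through a borderline product estimate in the critical Besov algebra. First I would introduce the auxiliary function $\lambda$ solving the transport equation with source $\partial_t\lambda+v\cdot\nabla\lambda=\textnormal{div}\,v$, $\lambda_{|t=0}=0$, and set $h:=e^{\lambda}f$. A direct computation shows that $h$ solves the source-free transport equation $\partial_t h+v\cdot\nabla h=0$ with $h_{|t=0}=f_0$, since the factor $e^{\lambda}$ is designed precisely so that the reaction term $f\,\textnormal{div}\,v$ cancels against $(\partial_t\lambda+v\cdot\nabla\lambda)\,h=(\textnormal{div}\,v)\,h$. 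This is the Lagrangian filtration that removes the compressible part.

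Next I would run the Vishik-type argument on $h$. Because $h$ is merely transported, its $L^\infty$ norm is preserved along the flow of $v$ even though $v$ is \emph{not} divergence free, so the whole machinery of \cite{vis,hmid-kera} applies: localization by $\Delta_q$, control of the commutators $[\Delta_q,v\cdot\nabla]h$ in $L^\infty$, and the dynamical interpolation that avoids a logarithmic loss. This yields the linear growth bound $\|h(t)\|_{B_{\infty,1}^0}\le C\|f_0\|_{B_{\infty,1}^0}\big(1+V(t)\big)$, where $V(t)=\int_0^t\|\nabla v(\tau)\|_{L^\infty}\,d\tau$; this accounts for the last factor in the statement.

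I would then control the integrating factor $\lambda$. From the representation of $\lambda$ along the flow one gets $\|\lambda(t)\|_{L^\infty}\le\int_0^t\|\textnormal{div}\,v\|_{L^\infty}\,d\tau\le CV(t)$. On the other hand, applying the transport estimate of Lemma \ref{propagation} in the critical algebra $B_{p,1}^{2/p}$ (in dimension two $B_{p,1}^{2/p}\hookrightarrow L^\infty$ is an algebra; the case $2/p<1$ is exactly Lemma \ref{propagation}, and one uses its higher-order analogue otherwise) produces $\|\lambda(t)\|_{B_{p,1}^{2/p}}\le Ce^{CV(t)}\|\textnormal{div}\,v\|_{L^1_tB_{p,1}^{2/p}}$, which is precisely the quantity appearing on the right-hand side of the theorem.

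Finally I would recover $f=e^{-\lambda}h=h+(e^{-\lambda}-1)\,h$ and estimate the correction in $B_{\infty,1}^0$. By the composition estimate in the algebra $B_{p,1}^{2/p}$ (with $F(x)=e^{-x}-1$, $F(0)=0$), $\|e^{-\lambda}-1\|_{B_{p,1}^{2/p}}\lesssim e^{C\|\lambda\|_{L^\infty}}\|\lambda\|_{B_{p,1}^{2/p}}\lesssim e^{CV(t)}\|\textnormal{div}\,v\|_{L^1_tB_{p,1}^{2/p}}$. The genuinely delicate point, and the main obstacle, is the product $(e^{-\lambda}-1)\,h$: multiplying an element of the \emph{critical} algebra $B_{p,1}^{2/p}$ by an element of $B_{\infty,1}^0$ is borderline, because the Bony remainder generates a logarithmically divergent frequency sum that cannot be closed by the naive paraproduct bounds. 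This is exactly where the announced splitting of the vorticity together with a dynamical interpolation is needed: one splits $h$ at a frequency threshold $N$ chosen according to the relative sizes of the base and a higher-regularity norm, estimates the low part by a finite (order-$N$) sum and the high part by its decay, and optimizes in $N$. Carrying out this bookkeeping converts the single power of $\|\textnormal{div}\,v\|_{L^1_tB_{p,1}^{2/p}}$ into the quadratic correction $e^{C\|\nabla v\|_{L^1_tL^\infty}}\|\textnormal{div}\,v\|_{L^1_tB_{p,1}^{2/p}}^2$ of the statement; combining this with the linear growth of $h$ gives the claimed inequality. The filtration itself is elementary, so the whole difficulty is concentrated in this critical $B_{\infty,1}^0$ product.
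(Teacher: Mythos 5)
Your filtration $h=e^{\lambda}f$ is exactly the paper's Lagrangian representation in disguise (with $\lambda=-W$ in the paper's notation, $h=f_0\circ\psi^{-1}$), and the first three steps of your plan --- Vishik-type dynamical interpolation for the transported part, the transport estimate for $\lambda$, and the composition law for $e^{-\lambda}-1$ --- are precisely what the paper does. The gap is in your last paragraph, where you mis-diagnose the final step. The product
$$\|uw\|_{B_{\infty,1}^{0}}\le C\|u\|_{B_{p,1}^{2/p}}\|w\|_{B_{\infty,1}^{0}}$$
is \emph{not} borderline for $p<+\infty$: in Bony's decomposition the two paraproducts are controlled by $\|u\|_{L^\infty}\|w\|_{B_{\infty,1}^{0}}$ and $\|w\|_{B_{\infty,1}^{0}}\|u\|_{B_{p,1}^{2/p}}$ respectively, and the remainder gives
$$\sum_{q}\sum_{q'\ge q-3}2^{2q/p}\|\Delta_{q'}u\|_{L^{p}}\|\widetilde{\Delta}_{q'}w\|_{L^{\infty}}\lesssim\sum_{q'}2^{2q'/p}\|\Delta_{q'}u\|_{L^{p}}\,\sup_{j}\|\Delta_{j}w\|_{L^{\infty}},$$
since $\sum_{q\le q'+3}2^{2q/p}\lesssim 2^{2q'/p}$ converges geometrically precisely because $2/p>0$. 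There is no logarithmic divergence and no need for any further frequency splitting at that stage; the only genuinely borderline case is $p=\infty$, which is excluded by hypothesis. Your proposed ``splitting plus optimization in $N$'' for the product is therefore solving a non-existent problem, and, more importantly, it does not actually produce the quadratic power of $\|\textnormal{div}\,v\|_{L^{1}_{t}B_{p,1}^{2/p}}$ in any verifiable way.

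The square has a different and more elementary origin, which your accounting misses because of a second inaccuracy: for a non-solenoidal $v$ the Vishik scheme applied to $h$ does \emph{not} give $\|h(t)\|_{B_{\infty,1}^{0}}\le C\|f_0\|_{B_{\infty,1}^{0}}(1+V(t))$ with $V(t)=\int_0^t\|\nabla v\|_{L^\infty}$ alone. The scheme propagates both positive and negative H\"older-type norms of the elementary solutions $\tilde w_q$, and the negative-index transport estimate (Lemma \ref{propagation}-(2)) costs $V_p(t)=\|\nabla v\|_{L^1_tL^\infty}+\|\textnormal{div}\,v\|_{L^1_tB_{p,\infty}^{2/p}}$, so the correct bound is $\|h(t)\|_{B_{\infty,1}^{0}}\le C\|f_0\|_{B_{\infty,1}^{0}}(1+V_p(t))$. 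Multiplying this factor by the factor $1+C e^{CV(t)}\|\textnormal{div}\,v\|_{L^1_tB_{p,1}^{2/p}}$ coming from your (correct) composition estimate, and using $(1+V_p)(1+e^{CV}D)\le(1+V)(1+e^{CV}D)^2\le 2(1+V)(1+e^{2CV}D^2)$ with $D=\|\textnormal{div}\,v\|_{L^1_tB_{p,1}^{2/p}}$, yields exactly the claimed inequality. So the correct completion of your plan is the paper's proof; you only need to replace the last paragraph by the clean paraproduct bound above and restore the divergence contribution in the Vishik step.
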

\begin{proof}
First we observe that the above estimate reduces in the incompressible case to the Vishik's estimate. Here, we will try to use the same approach of \cite{hmid-kera}, but as we will see the lack of the incompressibility brings more technical difficulties. 
Let us  denote by $\psi$ the flow associated to the velocity $v:$ 
$$\psi(t,x)=x+\int_{0}^{t}v\big(\tau,\psi(\tau,x)\big)d\tau,\;\;(t,x)\in\RR_+\times\RR^{2}.$$
We will make use of the following estimate for the flow and its inverse $\psi^{-1}$
\begin{equation}\label{eq1}
\Big\Vert\nabla\big(\psi(\tau,\psi^{-1}(t,\cdot))\big)\Big\Vert_{L^\infty}\le e^{\big\vert\int_{\tau}^{t}\Vert\nabla v(s)\Vert_{L^\infty}ds\big\vert}.
\end{equation}
We set $g(t,x)=f(t,\psi(t,x))$ then it is clear that $g$ satisfies the equation
$$\partial_t g(t,x)+(\textnormal{div}\,v)(t,\psi(t,x))g(t,x)=0.$$
Therefore we obtain
$$g(t,x)=f_0(x)e^{-\int_0^t(\textnormal{div}\,v)(\tau,\psi(\tau,x))d\tau}.$$
It follows that
$$f(t,x)=f_0(\psi^{-1}(t,x))e^{-\int_0^t(\textnormal{div}\,v)(\tau,\psi(\tau,\psi^{-1}(t,x)))d\tau}.$$
Set $h(x)=e^{x}-1$ and $W(t,x)=-\int_0^t(\textnormal{div}\,v)(\tau,\psi(\tau,\psi^{-1}(t,x)))d\tau,$ then we infer
$$f(t,x)=f_0(\psi^{-1}(t,x))\big(h(W(t,x))+1\big).$$
Thus the problem reduces  to the establishment of a composition law in $B_{\infty,1}^0$ space. For this purpose, we will use the following law product which can be obtained by using Bony's decomposition \cite{bo}: for $1\le p<\infty,$
$$\| uv\|_{B_{\infty,1}^0}\le C\|u\|_{B_{p,1}^{\frac2p}}\|v\|_{B_{\infty,1}^0}.$$
Therefore
\begin{equation}\label{log-hmd}
\|f(t)\|_{B_{\infty,1}^0}\le C\| f_0(\psi^{-1}(t,\cdot))\|_{B_{\infty,1}^0}\big(\|h(W(t,\cdot))\|_{B_{p,1}^{\frac2p}}+1\big).
\end{equation}
At this stage we need to the following result.
$$\|h(W(t,\cdot))\|_{B_{p,1}^{\frac2p}}\le C\|W(t,\cdot)\|_{B_{p,1}^{\frac2p}} e^{C\|W(t)\|_{L^\infty}}.$$
The proof of this estimate can be done as follows. By definition we get easily
%
$$\|h(W(t,\cdot))\|_{B_{p,1}^{\frac2p}}\le\sum_{n\ge 1}\frac{1}{n!}\|W^{n}(t,\cdot)\|_{B_{p,1}^{\frac2p}}.$$
According to the law product
$$
\|u^2\|_{B_{p,1}^{\frac2p}}\le C\|u\|_{L^\infty}\|u\|_{B_{p,1}^{\frac2p}}, p<+\infty,
$$
and using the induction principle we infer that for $n\geq 1$,
\begin{equation*}
\|W^{n}(t,\cdot)\|_{B_{p,1}^{\frac2p}}\le C^{n-1}\|W(t,\cdot)\|^{n-1}_{L^{\infty}}\|W(t,\cdot)\|_{B_{p,1}^{\frac2p}}.
\end{equation*}
Therefore 
\begin{eqnarray*}
\|h(W(t,\cdot))\|_{B_{p,1}^{\frac2p}}&\le& \Vert W\Vert_{B_{p,1}^{\frac2p}}\sum_{n\ge 1}\frac{C^{n-1}}{n!}\Vert W\Vert^{n-1}_{L^\infty}\\
&\le& \Vert W\Vert_{B_{p,1}^{\frac2p}}\sum_{n\ge 0}C^{n}\frac{\Vert W\Vert^{n}_{L^\infty}}{(n+1)!}\\
&\le& \Vert W\Vert_{B_{p,1}^{\frac2p}}\sum_{n\ge 0}C^{n}\frac{\Vert W\Vert^{n}_{L^\infty}}{n!}\\
&\le& \Vert W\Vert_{B_{p,1}^{\frac2p}}e^{C\Vert W\Vert_{L^\infty}}.
\end{eqnarray*}
This ends the proof of the desired inequality. Thus it follows that
\begin{eqnarray}\label{eq2}
\nonumber \|h(W(t,\cdot))\|_{B_{p,1}^{\frac2p}}&\lesssim& \|W(t,\cdot)\|_{B_{p,1}^{\frac2p}} e^{C\|W(t)\|_{L^\infty}}\\
&\lesssim& \|W(t,\cdot)\|_{B_{p,1}^{\frac2p}} e^{C\int_0^t\|\textnormal{div}\,v(\tau)\|_{L^\infty}d\tau}.
\end{eqnarray}
To estimate $W(t)$ we set $k_\tau(t,x)=\textnormal{div}\,v(\tau,\psi(\tau,\psi^{-1}(t,x))$, then $k_\tau(t,\psi(t,x))=\textnormal{div}\,v(\tau,\psi(\tau,x))$ and it follows that
\begin{equation*}
\left\{ 
\begin{array}{ll} 
\partial_{t}k_\tau+v\cdot\nabla k_\tau=0 \\ 
k_\tau(\tau,x)=\textnormal{div}\,v(\tau,x) .
\end{array} \right.
\end{equation*}
It remains to estimate $\Vert k_{\tau}(t)\Vert_{B_{p,1}^{\frac2p}}.$ For this aim, we use Lemma \ref{propagation}-(1), yielding for $p>2$ to 
\begin{eqnarray*}
\|k_\tau(t)\|_{B_{p,1}^{\frac2p}}
&\le& C \|\textnormal{div}\,v(\tau)\|_{B_{p,1}^{\frac2p}}e^{|\int_\tau^t\|\nabla v(s)\|_{L^\infty}ds|}.
\end{eqnarray*}
Combining the definition of $W$ with this latter estimate we get
\begin{eqnarray*}
\|W(t)\|_{B_{p,1}^{\frac2p}}&\le& \int_0^t\|k_\tau(t)\|_{B_{p,1}^{\frac2p}}d\tau\\
&\le& C\int_0^t\|\textnormal{div}\,v(\tau)\|_{B_{p,1}^{\frac2p}}e^{\int_\tau^t\|\nabla v(s)\|_{L^\infty}ds}d\tau.
\end{eqnarray*}
Plugging this estimate into \eqref{eq2} we find
$$\|h(W(t,\cdot))\|_{B_{p,1}^{\frac2p}}\le C{ e^{C\|\nabla v\|_{L^1_tL^\infty}}\int_0^t\|\textnormal{div}\,v(\tau)\|_{B_{p,1}^{\frac2p}}d\tau}.$$
Inserting this estimate in \eqref{log-hmd} we obtain
\begin{equation}\label{log1}
\|f(t)\|_{B_{\infty,1}^0}\le C\| f_0(\psi^{-1}(t,\cdot))\|_{B_{\infty,1}^0}\Big(1+{ e^{C\|\nabla v\|_{L^1_tL^\infty}}\int_0^t\|\textnormal{div}\,v(\tau)\|_{B_{p,1}^{\frac2p}}d\tau}\Big)
\end{equation}
It remains to estimate $\| f_0(\psi^{-1}(t,\cdot))\|_{B_{\infty,1}^0}$. Set $w(t,x)=f_0(\psi^{-1}(t,x))$ then it satisfies the transport equation
\begin{equation*}
\left\{ 
\begin{array}{ll} 
\partial_{t}w+v\cdot\nabla w=0 \\ 
w(0,x)=f_0(x). 
\end{array} \right.
\end{equation*}
We will split the  initial data into Fourier modes $\displaystyle{f_0=\sum_{q\geq-1}\Delta_q f_0}$ and for each frequency $q\geq-1,$ we denote by $\tilde{w}_{q}$ the unique global solution of the initial value problem
\begin{equation}\label{R_{Q}}\left\lbrace
\begin{array}{l}
\partial_t \tilde {w}_{q}+v\cdot\nabla \tilde{w}_{q}=0\\
{\tilde{w}_{q}}(0,\cdot)=\Delta_{q}f_0.\\
\end{array}
\right.
\end{equation}
According to Lemma \ref{propagation}-(2), we have 
$$\|\tilde{w}_{q}(t)\|_{B_{\infty,\infty}^{\pm s}}\leq C\|\Delta_{q} f_0\|_{B_{\infty,\infty}^{\pm s}} e^{CV(t)},\quad \forall \, 0<s<\min(1,\frac2p),$$
with
$\displaystyle{V(t):=\|\nabla v\|_{L^1_{t}L^\infty}+\|\textnormal{div}\,v\|_{L^1_tB_{p,\infty}^{\frac2p}}}.$
Combined with  the  definition of Besov spaces, this implies for $j,q\geq-1$
\begin{equation}\label{t7}
\|\Delta_{j}\tilde {w}_{q}(t)\|_{L^\infty}\leq C2^{-s|j-q|}\|\Delta_{q} f_0\|_{L^\infty} e^{CV(t)}.
\end{equation}
By linearity and the uniqueness of the solution, it is obvious that 
$$w=\sum_{q=-1}^\infty\tilde {w}_{q}.$$
From the definition of Besov spaces we have
\begin{eqnarray}
\label{t8}
\|w(t)\|_{B_{\infty,1}^0}\leq\sum_{|j-q|\geq N}\|\Delta_{j}\tilde{w}_{q}(t)\|_{L^\infty}+\sum_{|j-q|< N}\|\Delta_{j}\tilde{w}_{q}(t)\|_{L^\infty},
\end{eqnarray}
where $N\in \Bbb N$ is to be chosen later. To deal with the first sum we use (\ref{t7}) 
\begin{eqnarray}\label{t9}
\nonumber\sum_{|j-q|\geq N}\|\Delta_{j}\tilde{w}_{q}(t)\|_{L^\infty}&\lesssim& 2^{-N s}\sum_{q\geq-1}\|\Delta_{q}f_0\|_{L^\infty}e^{CV(t)}\\
&\lesssim&  2^{-Ns}\|f_0\|_{B_{\infty,1}^0}e^{CV(t)}.
\end{eqnarray}
For the second  sum in  the right-hand side of (\ref{t8}), we use the fact that the \mbox{operator $\Delta_{j}$} maps  $L^\infty$ into itself uniformly with respect to $j$,
\begin{equation*}\nonumber\sum_{|j-q|< N}
\|\Delta_{j}\tilde{w}_{q}(t)\|_{L^\infty}\lesssim \sum_{|j-q|< N}
\|\tilde{w}_{q}(t)\|_{L^\infty}.
\end{equation*}
Applying the maximum principle to  the system (\ref{R_{Q}}) yields 
$$\|\tilde{w}_{q}(t)\|_{L^\infty}\leq \|\Delta_q f_0\|_{L^\infty}.$$
So, it holds that 
\begin{equation*}
\sum_{|j-q|<N}\|\Delta_{j}\tilde{w}_{q}(t)\|_{L^\infty}\lesssim N\|f_0\|_{B_{\infty,1}^0}.
\end{equation*}
The outcome is  the following
$$\|w(t)\|_{B_{\infty,1}^0}\lesssim\|f_0\|_{B_{\infty,1}^0}\Big(2^{- Ns}e^{CV(t)}+N\Big).$$
Choosing $N$ such that
$$N=\Big[\frac{CV(t)}{ s\log 2}+1\Big],$$
we get 
$$\|w(t)\|_{B_{\infty,1}^0}\le C \|f_0\|_{B_{\infty,1}^0}\Big(1+\int_0^t\big(\|\nabla v(\tau)\|_{L^\infty}+\|\textnormal{div}\,v(\tau)\|_{B_{p,\infty}^{\frac2p}}\big)d\tau\Big).$$
Therefore
$$\|f_{0}\big(\psi^{-1}(t,\cdot)\big)\|_{B_{\infty,1}^0}\le C \|f_0\|_{B_{\infty,1}^0}\Big(1+\int_0^t\big(\|\nabla v(\tau)\|_{L^\infty}+\|\textnormal{div}\,v(\tau)\|_{B_{p,\infty}^{\frac2p}}\big)d\tau\Big)$$
Inserting this estimate into \eqref{log1} gives
$$\|f(t)\|_{B_{\infty,1}^0}\le C\|f_0\|_{B_{\infty,1}^0}\Big(1+{ e^{C\|\nabla v\|_{L^1_tL^\infty}}\|\textnormal{div}\,v\|_{L^1_tB_{p,1}^{\frac2p}}^2}\Big)\Big(1+\int_0^t\|\nabla v(\tau)\|_{L^\infty}d\tau\Big).$$
This is the desired result.
\end{proof}

\section{Proofs of the main results}
In this section we will firstly  extend  the result of Theorem \ref{theo2} to the framework of the heterogeneous Besov spaces $B_{2,1}^{s,\Psi}$. Secondly we will see how to derive the proof of the Theorem \ref{theo2} and ultimately we will devote the rest of the paper to the discussion of the proof of the general statement given in Theorem \ref{ttheo2}.
\subsection{General statement}
We will give  a generalization of the Theorem \ref{theo2}. 
\begin{thm}\label{ttheo2} Let $\Psi\in\mathcal{U}_{\infty}$ and $\{(v_{0,\varepsilon},c_{0,\varepsilon})\}_{0<\varepsilon\le 1}$ be a bounded family in $B_{2,1}^{2,\Psi}$ of initial data, that is
$$\sup_{0<\varepsilon\le 1}\Vert(v_{0,\varepsilon},c_{0,\varepsilon})\Vert_{B_{2,1}^{2,\Psi}}:=M_0<+\infty.$$
Then the system \eqref{C1} admits a unique solution $(v_{\varepsilon},c_{\varepsilon})\in\mathcal{C}([0,T_{\varepsilon}];B_{2,1}^{2,\Psi}),$  with
$$T_{\varepsilon}= C_{0}\log\log\,\{\Psi(\log({\varepsilon^{-1}}))\}.$$
Moreover, there exists $\eta>0$ such that   for small $\varepsilon$ and for all $0\le T\le T_{\varepsilon},$
$$\Vert(\textnormal{div}\,v_{\varepsilon},\nabla c_{\varepsilon})\Vert_{L_{T}^{1}L^\infty}\le C_{0}\Psi^{-\eta}(\varepsilon),\qquad\Vert\omega_{\varepsilon}(t)\Vert_{L^\infty}\le C_{0},$$ 
$$\Vert\nabla v_{\varepsilon}\Vert_{L_{T}^{1}L^\infty}\le C_{0}e^{C_{0}T}\qquad\textnormal{and}\;\;\;\Vert(v_{\varepsilon},c_{\varepsilon})(T)\Vert_{B_{2,1}^{2,\Psi}}\lesssim C_{0}e^{e^{C_{0}T}}.$$
The constant $C_0$ may depend on $M_0$ and on the profile $\Psi$.

Assume in addition that the incompressible parts $(\mathbb{P}v_{0,\varepsilon})_{0<\varepsilon\le1}$ converge in $L^2$ to some $v_{0}.$ Then the incompressible parts of the solution converge strongly to the global solution of the system \eqref{c2} with initial \mbox{data $v_0$.} More precisely, for all $T>0$ and for all $\widetilde\Psi\in \mathcal{U}$ such that
$$
\lim_{q\to+\infty}\frac{\widetilde\Psi(q)}{\Psi(q)}=0
$$
we have 
$$\lim_{\varepsilon\to0}\|\mathbb{P}v_{\varepsilon}- v\|_{L^\infty_TB_{2,1}^{2,\widetilde\Psi}}=0.$$
\end{thm}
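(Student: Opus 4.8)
The proof splits into a uniform‑lifespan bootstrap and an incompressible‑limit argument. For each fixed $\varepsilon$ the system \eqref{C1} is symmetric hyperbolic and $B_{2,1}^{2,\Psi}\hookrightarrow B_{2,1}^{2}\hookrightarrow W^{1,\infty}$, so local well‑posedness in $\mathcal C([0,T_\varepsilon];B_{2,1}^{2,\Psi})$ is classical; by Proposition \ref{f1}(2) the only obstruction to continuation is the growth of the Lipschitz quantity
$$V_\varepsilon(T):=\|\nabla v_\varepsilon\|_{L^1_TL^\infty}+\|\nabla c_\varepsilon\|_{L^1_TL^\infty},$$
since $\|(v_\varepsilon,c_\varepsilon)(T)\|_{B_{2,1}^{2,\Psi}}\le CM_0\,e^{CV_\varepsilon(T)}$. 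I would therefore run a continuation (continuity) argument on $V_\varepsilon$ and on the acoustic size, keeping in mind that, because all norms are scaling‑critical, no power of $\varepsilon$ is available and every gain must be extracted from the conserved extra regularity encoded by $\Psi$ (Corollary \ref{cor1} guarantees the data indeed live in such a space with uniform bounds).

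\textbf{Acoustic smallness.} The first step is to show that the acoustic parts are small of size a negative power of $\Psi$. Applying the frequency‑localized version of Lemma \ref{le1} to the wave quantities $\Gamma_\varepsilon,\Upsilon_\varepsilon$ of \eqref{t4}--\eqref{t5} gives Strichartz control of each dyadic block of $(\mathbb Q v_\varepsilon,c_\varepsilon)$. For a cutoff frequency $N$ I would split, for any of the relevant norms (e.g.\ $\|\divergence v_\varepsilon\|_{L^1_TL^\infty}$, $\|\divergence v_\varepsilon\|_{L^1_TB_{\infty,1}^0}$ or $\|\divergence v_\varepsilon\|_{L^1_TB_{p,1}^{2/p}}$), the low modes $q\le N$ --- handled by Proposition \ref{pr2} at the cost of a factor $2^{cN}\varepsilon^{1/4}$ --- from the high modes $q>N$, which I bound by Bernstein and the monotonicity of $\Psi$:
$$\sum_{q>N}2^{2q}\|\Delta_q v_\varepsilon\|_{L^2}\le \frac{1}{\Psi(N)}\,\|v_\varepsilon\|_{B_{2,1}^{2,\Psi}}\le \frac{CM_0}{\Psi(N)}\,e^{CV_\varepsilon(T)}.$$
Choosing $N\sim\beta\log(\varepsilon^{-1})$ with $\beta<\tfrac14$ makes the low‑mode term $\varepsilon^{1/4-\beta}$ negligible and leaves the dominant bound $C_0e^{CV_\varepsilon(T)}\Psi^{-\eta}$, $\Psi$ being evaluated at $\sim\log(\varepsilon^{-1})$; the controlled‑growth property of $\Psi$ lets me absorb $\beta$ into the exponent $\eta$. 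This is exactly the interpolation between a good Strichartz rate at low frequencies and the frozen high‑frequency energy announced in the introduction.

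\textbf{Vorticity, Gronwall and the double logarithm.} With acoustic smallness in hand I would close the loop for $V_\varepsilon$. Theorem \ref{theo3} gives, pointwise in time,
$$\|\omega_\varepsilon(t)\|_{B_{\infty,1}^0}\le C\|\omega_{0,\varepsilon}\|_{B_{\infty,1}^0}\Big(1+e^{C\|\nabla v_\varepsilon\|_{L^1_tL^\infty}}\|\divergence v_\varepsilon\|_{L^1_tB_{p,1}^{2/p}}^2\Big)\big(1+\|\nabla v_\varepsilon\|_{L^1_tL^\infty}\big),$$
where $\|\omega_{0,\varepsilon}\|_{B_{\infty,1}^0}\lesssim\|v_{0,\varepsilon}\|_{B_{2,1}^2}\le M_0$. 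Feeding this, together with the acoustic smallness of $\divergence v_\varepsilon$ and $\nabla c_\varepsilon$, into Lemma \ref{le5} and the $L^2$‑estimate of Proposition \ref{f1}(1) yields a differential inequality $U'(t)\lesssim C_0\,(1+U(t))\,\mathcal B(t)$ for $U:=\|\nabla v_\varepsilon\|_{L^1_tL^\infty}$, with bracket $\mathcal B(t):=1+e^{CU(t)}\Psi^{-2\eta}$. The \emph{main obstacle} is precisely that this is self‑referential: the acoustic smallness controlling $\mathcal B$ carries a factor $e^{CU}$, while $\mathcal B$ in turn drives $U$. Running the continuity argument, as long as $\mathcal B(t)\le 2$ one has $U(t)\lesssim C_0e^{C_0t}$, and inserting this into $e^{CU(t)}\Psi^{-2\eta}\le1$ forces $e^{C_0t}\lesssim \log\Psi(\log\varepsilon^{-1})$, i.e.\ $t\le C_0\log\log\{\Psi(\log\varepsilon^{-1})\}=:T_\varepsilon$. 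This is where the \emph{double} logarithm originates: one logarithm undoes the Gronwall exponential $e^{C_0t}$, the second undoes the exponential $e^{CU}$ hidden in the acoustic estimate. Since $\Psi\in\mathcal U_\infty$ we have $\Psi(\log\varepsilon^{-1})\to+\infty$, hence $T_\varepsilon\to+\infty$; the bounds $\|\omega_\varepsilon(t)\|_{L^\infty}\le\|\omega_{0,\varepsilon}\|_{L^\infty}e^{\|\divergence v_\varepsilon\|_{L^1_tL^\infty}}\le C_0$, $\|\nabla v_\varepsilon\|_{L^1_TL^\infty}\le C_0e^{C_0T}$ and $\|(v_\varepsilon,c_\varepsilon)(T)\|_{B_{2,1}^{2,\Psi}}\lesssim C_0e^{e^{C_0T}}$ then drop out of the bootstrap and of Proposition \ref{f1}.

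\textbf{Incompressible limit.} For the convergence, apply $\mathbb P$ to \eqref{C1}: the pressure‑like terms $\tfrac1\varepsilon\nabla c_\varepsilon$ and $\bar\gamma c_\varepsilon\nabla c_\varepsilon$ are killed and $\mathbb Pv_\varepsilon$ solves $\partial_t\mathbb Pv_\varepsilon+\mathbb P(\mathbb Pv_\varepsilon\cdot\nabla\mathbb Pv_\varepsilon)=R_\varepsilon$, where $R_\varepsilon$ gathers terms at least linear in the compressible part $\mathbb Qv_\varepsilon$, hence vanishing by the acoustic smallness above. Subtracting \eqref{c2} and performing an $L^2$ energy estimate on $w_\varepsilon:=\mathbb Pv_\varepsilon-v$ --- the nonlinear difference absorbed by Gronwall with $\int_0^T\|\nabla v\|_{L^\infty}$, finite on every fixed $[0,T]$ --- gives $\|w_\varepsilon\|_{L^\infty_TL^2}\to0$, using $\|\mathbb Pv_{0,\varepsilon}-v_0\|_{L^2}\to0$. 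To upgrade this to $B_{2,1}^{2,\widetilde\Psi}$ I would interpolate as in Remark \ref{rimk1}(2): splitting at a frequency $N$,
$$\|w_\varepsilon\|_{L^\infty_TB_{2,1}^{2,\widetilde\Psi}}\lesssim \widetilde\Psi(N)2^{2N}\|w_\varepsilon\|_{L^\infty_TL^2}+\sup_{q>N}\frac{\widetilde\Psi(q)}{\Psi(q)}\,\big(\|\mathbb Pv_\varepsilon\|_{L^\infty_TB_{2,1}^{2,\Psi}}+\|v\|_{L^\infty_TB_{2,1}^{2,\Psi}}\big).$$
The first term tends to $0$ with $\varepsilon$ for fixed $N$, while the second is bounded uniformly in $\varepsilon$ --- $v_\varepsilon$ by the lifespan bounds and $v$ because $v_0\in B_{2,1}^{2,\Psi}$ (weak compactness, as in Remark \ref{rimk1}(2)) and the $\Psi$‑regularity propagates for the $2d$ Euler flow via Vishik's linear growth --- and tends to $0$ as $N\to\infty$ since $\widetilde\Psi/\Psi\to0$. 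Letting first $\varepsilon\to0$ and then $N\to\infty$ yields $\|\mathbb Pv_\varepsilon-v\|_{L^\infty_TB_{2,1}^{2,\widetilde\Psi}}\to0$, which is the claim.
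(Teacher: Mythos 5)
Your proposal is correct and follows essentially the same route as the paper: acoustic smallness via a frequency splitting between the Strichartz bound of Proposition \ref{pr2} on low modes and the frozen $B_{2,1}^{2,\Psi}$ energy on high modes, a bootstrap on $V_\varepsilon$ combining Lemma \ref{le5}, Theorem \ref{theo3} and Gronwall to extract the double logarithm, and an $L^2$ energy estimate on $w_\varepsilon=\mathbb{P}v_\varepsilon-v$ upgraded to $B_{2,1}^{2,\widetilde\Psi}$ by a second frequency splitting using $\widetilde\Psi/\Psi\to0$. The paper implements the same bootstrap slightly differently (fixing $e^{e^{C_0T_\varepsilon}}=\Phi^{-1/2}(\varepsilon)$ and showing $e^{CV_\varepsilon(t)}\le\Phi^{-2/3}(\varepsilon)$ on a closed-open set), but this is only a presentational difference.
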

\begin{Rema}
\begin{enumerate}  
\item{} We observe that the result for the subcritical regularities  is a special case of the above theorem. Indeed, let  $\{(v_{0,\varepsilon},c_{0,\varepsilon})\}_{0<\varepsilon\le 1}$ be a bounded family in $H^s, s>2$. It is plain that the following embedding holds true: $H^s\hookrightarrow B_{2,1}^{2,\Psi}, $ with $ \Psi(q)=2^{q\alpha}$ and $0<\alpha<s-2.$ Thus we get by applying Theorem \ref{ttheo2} that 
$$T_{\varepsilon}= C_{0}\log\log\,\{\frac{1}{\varepsilon}\}.$$
This result is in agreement with known results about the lifespan for the subcritical regularities.
\item{} If $\Psi$ has a polynomial growth: $\Psi(q)=(q+2)^\alpha, \alpha>0$ then it is easily seen   that $\Psi\in \mathcal{U}_{\infty}$ and 
$$T_\varepsilon= C_{0}\log\log\log\{\frac{1}{\varepsilon}\}.$$

\end{enumerate}
\end{Rema}  
Before going further into the details of the  proof of the Theorem \ref{ttheo2}, we will first show how to deduce the result of the  Theorem \ref{theo2}.
\subsection{Proof of Theorem \ref{theo2}} 
Since
$$\sum_{q\ge-1}2^{2q}\sup_{0<\varepsilon\le 1}\Vert(\Delta_{q}v_{0,\varepsilon},\Delta_{q}c_{0,\varepsilon})\Vert_{L^{2}}<+\infty,$$
we deduce from  Corollary \ref{cor1} the existence of a profile $\Psi\in\mathcal{U}_{\infty}$ such that the family $(v_{0,\varepsilon},c_{0,\varepsilon})_{0<\varepsilon\le 1}$ is uniformly bounded in $B_{2,1}^{2,\Psi}.$ Then according to Theorem \ref{ttheo2}, there exists a unique solution $(v_{\varepsilon},c_{\varepsilon})$ belonging to the space $\mathcal{C}\big([0,T_{\varepsilon}];B_{2,1}^{2,\Psi}\big)$ with $$T_\varepsilon=  C_{0}\log\log\,\{\Psi(\log({\varepsilon^{-1}}))\}.$$
Therefore it is clear that $(v_{\varepsilon},c_{\varepsilon})\in\mathcal{C}\big([0,T_{\varepsilon}];B_{2,1}^{2}\big)$ and 
$$\lim_{\varepsilon\to 0}T_{\varepsilon}=+\infty.$$
For the incompressible limit, we can apply the second part of Theorem \ref{ttheo2} with the profile $\widetilde\Psi$ equal to a nonnegative constant and thus the space $B_{2,1}^{2,\widetilde\Psi}$ reduces to the usual Besov space $B_{2,1}^2.$

We will now give the complete proof of Theorem \ref{ttheo2}, which will be done in several steps. We start first with estimating the lifespan of the solutions and we discuss at the end the incompressible limit problem.
\subsection{Lifespan of the solutions}
We will give an a priori bound of $T_\varepsilon$ and show that the acoustic parts vanish when the Mach number goes to zero.
\begin{proof}
Using Lemma \ref{le5}, we get
$$\Vert\nabla v_{\varepsilon}(t)\Vert_{L^{\infty}}\lesssim\Vert v_{\varepsilon}(t)\Vert_{L^{2}}+\Vert\textnormal{div}\,v_{\varepsilon}(t)\Vert_{B_{\infty,1}^{0}} +\Vert\omega_{\varepsilon}(t)\Vert_{B_{\infty,1}^{0}}.$$
Integrating in time and using Proposition \ref{f1} we obtain
\begin{eqnarray*}
\Vert\nabla v_{\varepsilon}\Vert_{L_{T}^{1}L^{\infty}}&\lesssim& \Vert v_{\varepsilon}\Vert_{L_{T}^{1}L^{2}}+\Vert\textnormal{div}\,v_{\varepsilon}\Vert_{L_{T}^{1}B_{\infty,1}^{0}} +\Vert\omega_{\varepsilon}\Vert_{L_{T}^{1}B_{\infty,1}^{0}}\\
&\lesssim& C_{0}T e^{C\Vert\textnormal{div}\,v_{\varepsilon}\Vert_{L_{T}^{1}L^{\infty}}}+\Vert\textnormal{div}\,v_{\varepsilon}\Vert_{L_{T}^{1}B_{\infty,1}^{0}} +\Vert\omega_{\varepsilon}\Vert_{L_{T}^{1}B_{\infty,1}^{0}}\\
&\lesssim& C_{0}(1+T)e^{C\Vert\textnormal{div}\,v_{\varepsilon}\Vert_{L_{T}^{1}B_{\infty,1}^{0}}} +\Vert\omega_{\varepsilon}\Vert_{L_{T}^{1}B_{\infty,1}^{0}}.
\end{eqnarray*}
Now we recall the following notation,
$$V_{\varepsilon}(T)=\Vert\nabla v_{\varepsilon}\Vert_{L_{T}^{1}L^\infty}+\Vert\nabla c_{\varepsilon}\Vert_{L_{T}^{1}L^\infty}.$$
Then it follows that
\begin{eqnarray}\label{g1}
V_{\varepsilon}(T)
&\le& C_{0}(1+T) e^{C\Vert(\textnormal{div}\,v_{\varepsilon},\nabla c_{\varepsilon})\Vert_{L_{T}^{1}B_{\infty,1}^{0}}} +\Vert\omega_{\varepsilon}\Vert_{L_{T}^{1}B_{\infty,1}^{0}}.
\end{eqnarray}
Applying Theorem \ref{theo3} gives
\begin{eqnarray}\label{e4}
\nonumber \Vert\omega_{\varepsilon}(t)\Vert_{B_{\infty,1}^{0}}&\le&C \Vert\omega_{\varepsilon}^{0}\Vert_{B_{\infty,1}^{0}}\Big(1+e^{C\Vert\nabla v_{\varepsilon}\Vert_{L_{t}^{1}L^{\infty}}} \Vert\textnormal{div}\,v_{\varepsilon}\Vert_{L_{t}^{1}B_{p,1}^{\frac{2}{p}}}^{2}\Big)(1+\Vert\nabla
v_{\varepsilon}\Vert_{L_{t}^{1}L^{\infty}})\\
&\le& C_{0}\big(1+e^{CV_{\varepsilon}(t)}\Vert\textnormal{div}\,v_{\varepsilon}\Vert_{L_{t}^{1}B_{p,1}^{\frac{2}{p}}}^{2}\big)\big(1+V_{\varepsilon}(t)\big).
\end{eqnarray}
To estimate $\Vert\textnormal{div}\,v_{\varepsilon}\Vert_{B_{p,1}^{\frac2p}},$ we use the following interpolation inequality: for $2<p<\infty,$ 
$$\Vert\textnormal{div}\,v_{\varepsilon}\Vert_{B_{p,1}^{\frac2p}}\lesssim \Vert\textnormal{div}\,v_{\varepsilon}\Vert^{\frac2p}_{B_{2,1}^{1}} \Vert\textnormal{div}\,v_{\varepsilon}\Vert^{1-\frac2p}_{B_{\infty,1}^{0}}.$$
Integrating in time and using Bernstein and H\"older inequalities, we obtain
\begin{eqnarray*}
\Vert\textnormal{div}\,v_{\varepsilon}\Vert_{L_{T}^{1}B_{p,1}^{\frac2p}}&\lesssim& T^{\frac2p}\Vert\textnormal{div}\,v_{\varepsilon}\Vert^{\frac2p}_{L_{T}^{\infty}B_{2,1}^{1}} \Vert\textnormal{div}\,v_{\varepsilon}\Vert^{1-\frac2p}_{L_{T}^{1}B_{\infty,1}^{0}}\\
&\lesssim& T^{\frac2p}\Vert v_{\varepsilon}\Vert^{\frac2p}_{L_{T}^{\infty}B_{2,1}^{2}}\Vert\textnormal{div}\,v_{\varepsilon}\Vert^{1-\frac2p}_{L_{T}^{1}B_{\infty,1}^{0}}.
\end{eqnarray*}
Choose $p=4$ and using Proposition \ref{f1}-(2) yields to
\begin{equation}\label{e5}
\Vert\textnormal{div}\,v_{\varepsilon}\Vert^{2}_{L_{T}^{1}B_{4,1}^{\frac12}}\lesssim C_{0}T e^{CV_{\varepsilon}(T)}\Vert\textnormal{div}\,v_{\varepsilon}\Vert_{L_{T}^{1}B_{\infty,1}^{0}}.
\end{equation}
Putting together \eqref{e4} and \eqref{e5} and integrating in time we find
$$\Vert\omega_{\varepsilon}\Vert_{L_{T}^{1}B_{\infty,1}^{0}}\lesssim C_{0}\int_{0}^{T}\big(1+te^{CV_{\varepsilon}(t)}\Vert\textnormal{div}\,v_{\varepsilon}\Vert_{L_{t}^{1}B_{\infty,1}^{0}}\big)(1+V_{\varepsilon}(t))dt.$$
Inserting this estimate into  \eqref{g1} 
\begin{equation}\label{g2}
V_{\varepsilon}(T)\le C_{0}(1+T) e^{C\Vert(\textnormal{div}\,v_{\varepsilon},\nabla c_{\varepsilon})\Vert_{L_{T}^{1}B_{\infty,1}^{0}}}+C_{0}\int_{0}^{T}\big(1+te^{CV_{\varepsilon}(t)}\Vert\textnormal{div}\,v_{\varepsilon}\Vert_{L_{t}^{1}B_{\infty,1}^{0}}\big)(1+V_{\varepsilon}(t))dt.
\end{equation}

It remains to estimate $\Vert(\textnormal{div}\,v_{\varepsilon},\nabla c_{\varepsilon})\Vert_{L^1_TB_{\infty,1}^{0}}.$ For this purpose we will develop an interpolation procedure between the Strichartz estimates for lower frequencies and the energy estimates for higher frequencies.  More precisely, let $N\in\NN^{*}$ that will be judiciously fixed later. Then using Bernstein inequality combined with  the continuity of the operator $\mathbb{Q}$ on the Lebesgue space $L^{2},$ we find
\begin{eqnarray*}
\Vert\textnormal{div}\,v_{\varepsilon}\Vert_{B_{\infty,1}^{0}}&=& \Vert\textnormal{div }\mathbb{Q}\,v_{\varepsilon}\Vert_{B_{\infty,1}^{0}}\\
&\le& \sum_{-1\le q\le N}\Vert\Delta_{q}\textnormal{div }\mathbb{Q}\,v_{\varepsilon}\Vert_{L^{\infty}} +\sum_{q>N}\Vert\Delta_{q}\textnormal{div }\mathbb{Q}\,v_{\varepsilon}\Vert_{L^{\infty}}\\
&\lesssim& \sum_{q\le N}2^{q}\Vert\Delta_{q}\mathbb{Q}v_{\varepsilon}\Vert_{L^\infty}+\frac{1}{\Psi(N)}\sum_{q>N}\Psi(q)2^{2q}\Vert\Delta_{q}\mathbb{Q}v_{\varepsilon}\Vert_{L^{2}}\\
&\lesssim& 2^{N}\Vert\mathbb{Q}\,v_{\varepsilon}\Vert_{L^\infty}+\frac{1}{\Psi(N)}\Vert v_{\varepsilon}\Vert_{B_{2,1}^{2,\Psi}}.
\end{eqnarray*}
We have used the fact that the profile $\Psi$ is a nondecreasing function. Now integrating in time and using Proposition \ref{pr2} and Proposition \ref{f1}-(2)
\begin{eqnarray*}
\Vert\textnormal{div}\,v_{\varepsilon}\Vert_{L_{T}^{1}B_{\infty,1}^{0}}&\lesssim& 2^{N}T^{\frac{3}{4}}\Vert\mathbb{Q}\,v_{\varepsilon}\Vert_{L_{T}^{4}L^\infty}+\frac{T}{\Psi(N)} \Vert v_{\varepsilon}\Vert_{L_{T}^{\infty}B_{2,1}^{2,\Psi}}\\
&\le& C_{0}\big(T^{\frac{3}{4}}(1+T)+T\big)e^{CV_{\varepsilon}(T)}\big(\varepsilon^{\frac{1}{4}}2^{N}+\frac{1}{\Psi(N)}\big)\\
&\le& C_{0}(1+T^{\frac{7}{4}})e^{CV_{\varepsilon}(T)}\Big(\varepsilon^{\frac{1}{4}}2^{N}+\frac{1}{\Psi(N)}\Big).
\end{eqnarray*}
By  similar computations we get
$$\Vert\nabla c_{\varepsilon}\Vert_{L_{T}^{1}B_{\infty,1}^{0}}\le C_{0}(1+T^{\frac{7}{4}})e^{CV_{\varepsilon}(T)}\Big(\varepsilon^{\frac{1}{4}}2^{N}+\frac{1}{\Psi(N)}\Big).$$
Therefore
$$\Vert(\textnormal{div}\,v_{\varepsilon},\nabla c_{\varepsilon})\Vert_{L_{T}^{1}B_{\infty,1}^{0}}\le C_{0}(1+T^{\frac{7}{4}})e^{CV_{\varepsilon}(T)}\Big(\varepsilon^{\frac{1}{4}}2^{N}+\frac{1}{\Psi(N)}\Big).$$
Now, we choose $N$ such that,
$$N\approx \log\big(\frac{1}{\varepsilon^{\frac{1}{8}}}\big).$$
Hence we get
\begin{eqnarray*}
\Vert(\textnormal{div}\,v_{\varepsilon},\nabla c_{\varepsilon}\Vert_{L_{T}^{1}B_{\infty,1}^{0}}\le C_{0}(1+T^{\frac{7}{4}})e^{CV_{\varepsilon}(T)}\big(\varepsilon^{\frac{1}{8}}+\frac{1}{\Psi\big(\log(\frac{1}{\varepsilon^{\frac{1}{8}}})\big)}\big).
\end{eqnarray*}
According to the Remark \ref{comparison1}, the profile $\Psi$ has at most an exponential growth: there exists $\alpha>0$ such that
$$
\forall x\geq-1;\quad\,\Psi(x)\le C e^{\alpha x}.
$$ 
Hence we get
\begin{equation}\label{choix1}
\varepsilon^{\frac{1}{8}}\le \frac{C^{\frac1\alpha}}{\Psi^{\frac1\alpha}(\log(\frac{1}{\varepsilon^{\frac{1}{8}}})}.
\end{equation}
Let $\beta=\min(1,\frac1\alpha)$ then 
\begin{equation}\label{e7}
\Vert(\textnormal{div}\,v_{\varepsilon},\nabla c_{\varepsilon}\Vert_{L_{T}^{1}B_{\infty,1}^{0}}
\le C_{0}(1+T^{\frac{7}{4}})e^{CV_{\varepsilon}(T)}\Phi(\varepsilon),
\end{equation}
with
$$
\Phi(\varepsilon):=\frac{1}{\Psi^\beta\big(\log(\frac{1}{\varepsilon^{\frac{1}{8}}})\big)}\cdot
$$
Plugging \eqref{e7} into \eqref{g2} and using Gronwall's inequality we obtain
\begin{eqnarray}\label{e8}
\nonumber V_{\varepsilon}(T)&\le& C_{0}(1+T)e^{C_{0}(1+T^{\frac{7}{4}})\Phi(\varepsilon)e^{CV_{\varepsilon}(T)}}+ C_{0}\int_{0}^{T}\big(1+(1+t^{\frac{11}{4}})e^{CV_{\varepsilon}(t)}\Phi(\varepsilon)\big)(1+V_{\varepsilon}(t))dt\\
 &\le& C_{0}e^{C_0 T}\exp\{{C_0(1+T^{\frac{15}{4}})\Phi(\varepsilon)e^{CV_{\varepsilon}(T)}}\}.
\end{eqnarray}
We choose $T_{\varepsilon}$ such that,
\begin{equation}\label{e9}
e^{e^{C_{0}T_{\varepsilon}}}=\Phi^{-\frac{1}{2}}(\varepsilon),
\end{equation}
then we claim that for small $\varepsilon$ and $0\le t\le T_{\varepsilon}$ we have
\begin{equation}\label{e10}
e^{CV_{\varepsilon}(t)}\le\Phi^{-\frac{2}{3}}(\varepsilon).
\end{equation}
Indeed, define
$$J_{{\varepsilon}}:=\big\{t\in[0,T_{\varepsilon}]; e^{CV_{\varepsilon}(t)}\le\Phi^{-\frac{2}{3}}(\varepsilon)\big\}.$$
This set is nonempty since $0\in J_{{\varepsilon}}$. It is also  closed by the continuity of the mapping $t\mapsto V_{\varepsilon}(t)$. It remains to prove that $J_{{\varepsilon}}$ is an open subset of $[0,T_\varepsilon]$ and thus we deduce $J_{{\varepsilon}}=[0,T_{\varepsilon}].$ Let $t\in J_{{\varepsilon}},$ then using \eqref{e8} we get for small $\varepsilon$
\begin{equation}\label{11}
e^{CV_{\varepsilon}(t)}\le C_0e^{\exp\{ C_0T_\varepsilon+C_{0}T_{\varepsilon}^{\frac{15}{4}}\Phi^{\frac{1}{3}}(\varepsilon)\}}\cdot
\end{equation}
From \eqref{e9}, we infer
\begin{eqnarray*}
C_0T_{\varepsilon}&=&\log\log\Phi^{-\frac{1}{2}}(\varepsilon)\approx \log\log{\Psi\big(\log(\frac{1}{\varepsilon^{\frac{1}{8}}})\big)}\cdot
\end{eqnarray*}
Since
$$
\lim_{\varepsilon\to0}\Phi^{\frac13}(\varepsilon)\big\{\log\log\Phi^{-\frac{1}{2}}(\varepsilon)\big\}^{\frac{15}{4}}=0,
$$
then  for sufficiently small $\varepsilon$ and for $t\in[0,T_{\varepsilon}],$ we get
\begin{eqnarray*}
e^{CV_{\varepsilon}(t)}&\le& 2C_0\Phi^{-\frac{1}{2}}(\varepsilon)\\
&<& \Phi^{-\frac{2}{3}}(\varepsilon).
\end{eqnarray*}
This proves that $t$ belongs to the interior of $J_{{\varepsilon}}$ and consequently $J_{{\varepsilon}}$ is an open subset of $[0,T_\varepsilon]$. Finally we get  $J_{{\varepsilon}}=[0,T_{\varepsilon}].$ From the previous estimate, we obtain for all $T\in [0,T_\varepsilon]$
\begin{equation}\label{ee8}
(1+T^{\frac{15}{4}})e^{CV_{\varepsilon}(T)}\Phi(\varepsilon)\lesssim \{\log\log\Phi^{-\frac{1}{2}}(\varepsilon)\big\}^{\frac{15}{4}}\Phi^{\frac13}(\varepsilon)\lesssim 1.
\end{equation}
Inserting \eqref{ee8} into \eqref{e8}, yields for $0\le T\le T_{\varepsilon}$
\begin{equation}\label{ee9}
V_{\varepsilon}(T)\le C_{0}e^{C_{0}T}.
\end{equation}
In particular we have obtained
$$\Vert\nabla v_{\varepsilon}\Vert_{L_{T}^{1}L^\infty}\le C_{0}e^{C_{0}T}.$$
Plugging the estimate \eqref{ee8} into \eqref{e7}, we get for small $\varepsilon$ and for $T\in[0,T_{\varepsilon}]$ that
\begin{eqnarray}\label{dig}
\nonumber \Vert(\textnormal{div}\,v_{\varepsilon},\nabla c_{\varepsilon})\Vert_{L_{T}^{1}B_{\infty,1}^{0}}&\le& C_{0}\{\log\log\Phi^{-\frac{1}{2}}(\varepsilon)\big\}^{\frac{7}{4}}\Phi^{\frac13}(\varepsilon)\\
&\le& C_{0}\Phi^{\frac14}(\varepsilon).
\end{eqnarray}
Using Proposition \ref{pr2},  \eqref{e10} and \eqref{choix1} we obtain for small $\varepsilon$,
\begin{eqnarray}\label{digg2}
\nonumber\Vert\mathbb{Q}v_{\varepsilon}\Vert_{L_{T}^{4}L^\infty}+\Vert c_{\varepsilon}\Vert_{L_{T}^{4}L^\infty}&\le& C_{0}\varepsilon^{\frac14}(1+T) e^{CV_{\varepsilon}(T)}\\
\nonumber&\le& C_{0}\varepsilon^{\frac14}\,\Phi^{-\frac23}(\varepsilon)\log\log\Phi^{-\frac{1}{2}}(\varepsilon)\\
\nonumber&\le& C_0\Phi^{\frac43}(\varepsilon)\log\log\Phi^{-\frac{1}{2}}(\varepsilon)\\
&\le& C_{0}\Phi(\varepsilon).
\end{eqnarray}
Let us now move to the estimate of the vorticity. First, recall that  $\omega_\varepsilon$ satisfies the compressible transport equation
$$
\partial_t\omega_{\varepsilon}+v_\varepsilon\cdot\nabla\omega_{\varepsilon}+\omega_{\varepsilon}\, \textnormal{div}\,v_\varepsilon=0.
$$
Consequently we get by using Gronwall's inequality that
$$
\|\omega_\varepsilon(t)\|_{L^\infty}\le\|\omega_{0,\varepsilon}\|_{L^\infty}e^{\|\textnormal{div}\,v_\varepsilon\|_{L^1_tL^\infty}}.
$$
It suffices to apply  \eqref{dig}, leading  to the following estimate: for all $ T\in[0,T_\varepsilon]$
$$
\|\omega_\varepsilon(T)\|_{L^\infty}\le C_0.
$$
Finally, to estimate $\Vert(v_{\varepsilon},c_{\varepsilon})(T)\Vert_{B_{2,1}^{2,\Psi}}$ we use Proposition \ref{f1} combined with \eqref{ee9} to find,
\begin{eqnarray}\label{croissance}
\nonumber\Vert(v_{\varepsilon},c_{\varepsilon})(T)\Vert_{B_{2,1}^{2,\Psi}}&\le& C\Vert(v_{0,\varepsilon},c_{0,\varepsilon})\Vert_{B_{2,1}^{2,\Psi}}e^{CV_{\varepsilon}(T)}\\
&\le& C_{0}e^{\exp\{C_{0}T\}}.  
\end{eqnarray}
We observe that in Theorem \ref{ttheo2} the formula for the  lifespan   involves the quantity $\log\log\Psi(\log\varepsilon^{-1})$ instead of  $\log\log\Psi(\log\varepsilon^{-\frac18})$. This is due to the fact that for any given $\lambda>0$  the {function defined by  $\Psi_\lambda(x):=\Psi(\lambda x)$} belongs to the same  class $\mathcal{U}_\infty$.
 
This completes the  proof of the first part of  Theorem \ref{ttheo2}. 
\end{proof}

\subsection{Incompressible limit}  In this paragraph we will sketch the proof  of the second part of Theorem \ref{ttheo2} which deals with the incompressible limit.
\begin{proof}
We will proceed in two steps. In the first one, we will prove that for any fixed $T>0,$ the family $(\mathbb{P}v_{\varepsilon})_{\varepsilon}$ converges strongly in $L_{T}^{\infty}L^2$,  when $\varepsilon\to 0$, to the solution  $v$ of the incompressible Euler equations with initial data $v_0$. It is worthy pointing out  that similarly to the Remark \ref{rimk1}-$(2)$ we can show  that the limit $v_0$ belongs also to the same space $B_{2,1}^{2,\Psi}.$  In the second step we will show how to get the strong convergence in the Besov spaces $B_{2,1}^{2,\widetilde\Psi}$.  We mention that the limit system \eqref{c2} is globally well-posed if $v_0\in B_{2,1}^{2,\Psi}$. Indeed, combining the embedding $B_{2,1}^{2,\Psi}\hookrightarrow B_{2,1}^2$ with the Vishik's result \cite{vis} we get that the system \eqref{c2} admits a unique global solution such that 
$v\in \mathcal{C}(\RR_+; B_{2,1}^2),$ with the following Lipschitz bound 
$$
\|\nabla v(t)\|_{L^\infty}\le C_0e^{C_0 t}.
$$
This latter a priori estimate together with Proposition \ref{f1}, which remains valid for \eqref{c2}, gives the global persistence of  the initial regularity $B_{2,1}^{2,\Psi}.$ More precisely, we get
\begin{equation}\label{log-vis}
\|v(t)\|_{B_{2,1}^{2,\Psi}}\le C_0e^{\exp{C_0 t}}.
\end{equation}
Let $\varepsilon>0$ and set 
$$
\zeta_{\varepsilon}:=v_{\varepsilon}-v,\quad w_{\varepsilon}=\mathbb{P}v_{\varepsilon}-\, v.
$$ 
By applying Leray's projector $\mathbb{P}$ to the first equation of \eqref{C1}, we obtain
$$
\partial_{t}\mathbb{P}v_{\varepsilon}+\mathbb{P}(v_{\varepsilon}\cdot\nabla v_{\varepsilon})=0.
$$
Taking the difference between the above equation and \eqref{c2} yields to
$$
\partial_{t}\,w_{\varepsilon}+\mathbb{P}(v_{\varepsilon}\cdot\nabla\zeta_{\varepsilon})+\mathbb{P}(\zeta_{\varepsilon}\cdot\nabla v)=0.
$$
We take the $L^2$ inner product of the above equation with $\,w_{\varepsilon},$ integrating by parts and using the identities 
$$
\zeta_{\varepsilon}=\,w_{\varepsilon}+\mathbb{Q}v_{\varepsilon},\quad \mathbb{P}\,w_{\varepsilon}=\,w_{\varepsilon},
$$ 
we get,
\begin{eqnarray*}
\frac{1}{2}\frac{d}{dt}\Vert\,w_{\varepsilon}(t)\Vert^{2}_{L^2}&=& -\int_{\RR^2}(v_{\varepsilon}\cdot\nabla\zeta_{\varepsilon})\mathbb{P}\,w_{\varepsilon}dx-\int_{\RR^2}(\zeta_{\varepsilon}\cdot\nabla v)\mathbb{P}\,w_{\varepsilon}dx\\
&=& -\int_{\RR^2}\big(v_{\varepsilon}\cdot\nabla(\,w_{\varepsilon}+\mathbb{Q}v_{\varepsilon})\big)\,w_{\varepsilon}dx-\int_{\RR^2}\big((\,w_{\varepsilon}+\mathbb{Q}v_{\varepsilon})\cdot\nabla v\big)\,w_{\varepsilon}dx\\
&=& \frac{1}{2}\int_{\RR^2}\vert\,w_{\varepsilon}\vert^{2}\textnormal{div}\,v_{\varepsilon}dx-\int_{\RR^2}(\,w_{\varepsilon}\cdot\nabla v)\,w_{\varepsilon}dx\\
&-& \int_{\RR^2}(v_{\varepsilon}\cdot\nabla\mathbb{Q}v_{\varepsilon}+\mathbb{Q}v_{\varepsilon}\cdot\nabla{ v})\,w_{\varepsilon}dx\\
&\le& \Big(\frac{1}{2}\Vert\textnormal{div}\,v_{\varepsilon}(t)\Vert_{L^\infty}+\Vert\nabla v(t)\Vert_{L^\infty}\Big)\Vert\,w_{\varepsilon}(t)\Vert^{2}_{L^2}\\
&+& \Big(\Vert v_{\varepsilon}(t)\Vert_{L^2}\Vert\nabla\mathbb{Q}v_{\varepsilon}(t)\Vert_{L^\infty}+\Vert\mathbb{Q}v_{\varepsilon}(t)\Vert_{L^\infty}\Vert\nabla v(t)\Vert_{L^2}\Big)\Vert\,w_{\varepsilon}(t)\Vert_{L^2}.
\end{eqnarray*}
This implies that
\begin{eqnarray*}
\frac{d}{dt}\Vert\,w_{\varepsilon}(t)\Vert_{L^2}&\le& \big(\frac{1}{2}\Vert\textnormal{div}\,v_{\varepsilon}(t)\Vert_{L^\infty}+\Vert\nabla v(t)\Vert_{L^\infty}\big)\Vert\,w_{\varepsilon}(t)\Vert_{L^2}\\
&+& \Vert v_{\varepsilon}(t)\Vert_{L^2}\Vert\nabla\mathbb{Q} v_{\varepsilon}(t)\Vert_{L^\infty}+\Vert\mathbb{Q}v_{\varepsilon}(t)\Vert_{L^\infty}\Vert\nabla v(t)\Vert_{L^2}.
\end{eqnarray*}
Integrating in time and using Gronwall's inequality yield to
\begin{equation}\label{x4}
\Vert\,w_{\varepsilon}(t)\Vert_{L^2}\le\big(\Vert w^{0}_{\varepsilon}\Vert_{L^2}+F_{\varepsilon}(t)\big)e^{\frac{1}{2}\Vert\textnormal{div}\,v_{\varepsilon}\Vert_{L_{t}^{1}L^\infty}+\Vert\nabla{ v}\Vert_{L_{t}^{1}L^\infty}},
\end{equation}
where
\begin{eqnarray*}
F_{\varepsilon}(t)&:=&\Vert v_{\varepsilon}\Vert_{L_{t}^{\infty}L^2}\Vert\nabla\mathbb{Q}v_{\varepsilon}(t)\Vert_{L_{t}^{1}L^\infty}+\Vert\mathbb{Q}v_{\varepsilon}\Vert_{L_{t}^{1}L^\infty}\Vert\nabla v\Vert_{L_{t}^{\infty}L^2}\\
&\lesssim& \Vert v_{\varepsilon}\Vert_{L_{t}^{\infty}L^2}\Vert\nabla\mathbb{Q}v_{\varepsilon}\Vert_{L_{t}^{1}L^\infty}+\Vert\mathbb{Q}v_{\varepsilon}\Vert_{L_{t}^{1}L^\infty}\Vert \omega\Vert_{L_{t}^{\infty}L^2},
\end{eqnarray*}
where $\omega$ denotes the vorticity of $v$.To estimate $\Vert\nabla\mathbb{Q} v_{\varepsilon}\Vert_{L^\infty}$ we use Bernstein inequality, 
\begin{eqnarray*}
\Vert\nabla\mathbb{Q}v_{\varepsilon}\Vert_{L^\infty}&\le& \Vert\Delta_{-1}\nabla\mathbb{Q}v_{\varepsilon}\Vert_{L^\infty}+\sum_{q\ge 0}\Vert\Delta_{q}\nabla\mathbb{Q}v_{\varepsilon}\Vert_{L^\infty}\\
&\lesssim& \Vert\mathbb{Q}v_{\varepsilon}\Vert_{L^\infty}+\sum_{q\ge 0}\Vert\Delta_{q}\textnormal{div}\,v_{\varepsilon}\Vert_{L^\infty}\\
&\lesssim& \Vert\mathbb{Q}v_{\varepsilon}\Vert_{L^\infty}+\Vert\textnormal{div}\,v_{\varepsilon}\Vert_{B_{\infty,1}^{0}}.
\end{eqnarray*}
Integrating in time
$$\Vert\nabla\mathbb{Q}v_{\varepsilon}\Vert_{L_{t}^{1}L^\infty}\le\Vert\mathbb{Q}v_{\varepsilon}\Vert_{L_{t}^{1}L^\infty}+\Vert\textnormal{div}\,v_{\varepsilon}\Vert_{L_{t}^{1}B_{\infty,1}^{0}}.$$
By virtue of \eqref{dig} and \eqref{digg2} one has for small $\varepsilon$ and $t\in [0,T],$
$$
\Vert\nabla\mathbb{Q}v_{\varepsilon}\Vert_{L_{t}^{1}L^\infty}+\Vert\mathbb{Q}v_{\varepsilon}\Vert_{L_{t}^{1}L^\infty}\le C_0 \Phi^{\frac14}(\varepsilon).
$$
On the other hand we get from Proposition \ref{f1}-(1) and \eqref{dig}
$$\Vert v_{\varepsilon}(t)\Vert_{L_{t}^{\infty}L^2}\le C_0.$$
To estimate $\|\omega(t)\|_{L^2}$ we use the fact that the vorticity is transported for the incompressible flow and thus
$$\|\omega(t)\|_{L^2}=\|\omega_0\|_{L^2}.$$
Therefore we obtain for $t\in[0,T]$ and for small $\varepsilon,$
$$F_{\varepsilon}(t)\le C_0 \Phi^{\frac14}(\varepsilon).$$
According to Vishik's result \cite{vis} we have for all $t\geq 0$
$$\|\nabla v(t)\|_{L^\infty}\leq C_0 e^{C_0 t}.$$
Inserting the previous estimates in \eqref{x4} we find for all $t\in[0,T]$ and small values of $\varepsilon,$
\begin{equation}\label{taux de conv}
\|w_\varepsilon(t)\|_{L^2}\le C_0e^{\exp{C_0 t}}\big(\|\mathbb{P}v_{0,\varepsilon}-v_0\|_{L^2}+ \Phi^{\frac14}(\varepsilon)\big).
\end{equation}
This achieves the proof of the strong convergence in $L^\infty_{loc}(\RR_+;L^2)$.

Let us now turn to the proof of  the strong convergence in the space $L^\infty_TB_{2,1}^{2,\widetilde\Psi}$. We recall that $\widetilde\Psi$ is any element  of  the set $\mathcal{U}, $ see Definition \ref{heter}, and satisfying in addition  the assumption
$$\lim_{q\to +\infty}\frac{\widetilde\Psi(q)}{\Psi(q)}=0.$$
By using \eqref{croissance} and the continuity of Leray's projector on the spaces $B_{2,1}^{2,\Psi}$ we get  for $t\in[0,T]$ and for small $\varepsilon$
\begin{equation}\label{log-viss}
\|\mathbb{P}v_\varepsilon(t)\|_{B_{2,1}^{2,\Psi}}\le C_0e^{\exp{C_0 t}}.
\end{equation}
Now let $N\in \NN^\star$ be an arbitrary number. Then by \eqref{taux de conv}, \eqref{log-vis} and \eqref{log-viss} we obtain
\begin{eqnarray*}
\|w_\varepsilon(t)\|_{B_{2,1}^{2,\widetilde\Psi}}&=&\sum_{q=-1}^{N}\widetilde\Psi(q)2^{2q}\|\Delta_qw_\varepsilon(t)\|_{L^2}+\sum_{q>N}\frac{\widetilde\Psi(q)}{\Psi(q)}\Psi(q)2^{2q}\|\Delta_qw_\varepsilon(t)\|_{L^2}\\
&\le& C_0e^{\exp{C_0 t}}\big(\|\mathbb{P}v_{0,\varepsilon}-v_0\|_{L^2}+ \Phi^{\frac14}(\varepsilon)\big)\widetilde\Psi(N)2^{2N}+\rho_N \|(\mathbb{P}v_\varepsilon,v)(t)\|_{B_{2,1}^{2,\Psi}}\\
&\le& C_0e^{\exp{C_0 t}}\Big(\big(\|\mathbb{P}v_{0,\varepsilon}-v_0\|_{L^2}+ \Phi^{\frac14}(\varepsilon)\big)\widetilde\Psi(N)2^{2N}+\rho_N\Big),
\end{eqnarray*}
with
 $$\rho_N:=\max_{q\geq N}\frac{\widetilde\Psi(q)}{\Psi(q)}.$$
 We remark that the $(\rho_N)_N$ is a decreasing sequence to zero. Consequently, 
 $$ \limsup_{\varepsilon\to 0^+} \|w_\varepsilon\|_{L^\infty_TB_{2,1}^{2,\widetilde\Psi}}\le C_0e^{\exp{C_0 T}}\rho_N. $$
 Combining this estimate with $\displaystyle{\lim_{N\to +\infty}\rho_N=0},$ we get
 $$\lim_{\varepsilon\to 0^+} \|w_\varepsilon\|_{L^\infty_TB_{2,1}^{2,\widetilde\Psi}}=0.$$
 This completes the proof of the strong convergence.
\end{proof}

\end{document}